\newtheorem{theorem}{Theorem}
\newtheorem{lemma}{Lemma}
\newtheorem{corollary}{Corollary}
\def \endproof {\hfill $\Box$ \linebreak \smallskip}
\newcommand\bound{\alpha}
\newcommand\hi{\hspace{-1ex}}
\newcommand\hii{\hspace{-2ex}}
\newcommand\hiii{\hspace{-3ex}}
\renewcommand\paragraph{\@startsection{paragraph}{4}{\z@}%
                       {-12\p@ \@plus -4\p@ \@minus -4\p@}%
                       {-0.5em \@plus -0.22em \@minus -0.1em}%
                       {\normalfont\normalsize\itshape}}
\title{On the Number of Spanning Trees a Planar Graph Can Have}
\author { Kevin Buchin \thanks{Department of Mathematics and Computer Science, Technical University of Eindhoven,  {\tt k.a.buchin@tue.nl}, 
supported by the Netherlands Organisation for Scientific Research (NWO)
under project no. 639.022.707} \and Andr\'e Schulz
  \thanks{ Institut f\"ur Mathematsche Logik und Grundlagenforschung, Universit\"at M\"unster,  {\tt
      andre.schulz@uni-muenster.de}, supported by the German Research Foundation (DFG) under grant SCHU 2458/1-1.}}
\begin{document}
  \maketitle
  \begin{abstract}
We prove that any planar graph on $n$ vertices has less than $O(5{.}2852^n)$ spanning trees.
Under the restriction that the planar graph is 3-connected and contains no triangle and no quadrilateral the number of its spanning trees is less than $O(2{.}7156^n)$. As a consequence of the latter the grid size needed to realize a 3d polytope with integer coordinates can be bounded by $O(147.{7}^n)$. Our observations imply improved upper bounds for related quantities: the number of cycle-free graphs in a planar graph is bounded by  $O(6.4884^n)$, the number of plane spanning trees on a set of $n$ points in the plane is bounded by  $O(158.6^n)$, and the number of plane cycle-free graphs on a set of $n$ points in the plane is bounded by  $O(194{.}7^n)$.

\end{abstract}

\setcounter{page}{1}
\section{Introduction}


The number of spanning trees of a connected graph, also considered as the complexity of the graph, is an important graph invariant. Its importance largely stems from Kirchhoff's seminal matrix tree theorem: The number of spanning trees equals the absolute value of any cofactor of the Laplacian matrix of the graph. Furthermore, this number is the order of the Jacobian group of the graph, also known as critical group, or as sandpile model in theoretical physics~\cite{bhn-lif-97,b-aptg-97}. This group can be represented as a chip firing game on the graph; in this context the number of spanning trees counts the number of the stable and recurrent configurations~\cite{b-cfcg-99}. The number of spanning trees is also a measure for the global reliability of a network. Upper bounds on the number of combinatorial structures are often helpful to determine the (exponential) running time for exact algorithms of NP-complete problems.


Our motivation to study the number of spanning trees of planar graphs comes from an application of Kirchhoff's matrix tree theorem. Instead of computing the number of spanning trees with Kirchhoff's theorem one can use bounds on the number of spanning trees to obtain bounds for the cofactors of the Laplacian matrix. These cofactors appear in various settings. For example, Tutte's famous spring embedding is computed by solving a linear system that is based on the Laplacian matrix~\cite{t-crg-60,t-hdg-63}. As a consequence of Cramer's rule the cofactor of the Laplacian matrix is the denominator of all coordinates in the embedding. Therefore, by multiplying with the number of spanning trees, we can scale to an integer embedding. This idea finds applications in the grid embedding of 3d polytopes~\cite{rrs-epsg-07,rg-rsp-96}. Before we describe this application in more detail, we introduce some notation.

Let $\mathcal{G}_n$ be the set of all planar graphs with $n$ vertices. For a graph $G\in \mathcal{G}_n$ we denote the number of its (labeled) spanning trees with $t(G)$. For every $\mathcal{G}_n$ let  $T(n)$ be the maximal number of spanning trees a graph in this class can have, that is $T(n)=\max_{G\in\mathcal{G}_n}\{t(G)\}$. We study the growth rate of the function $T(n)$. Since it seems intractable to obtain an exact formula for $T(n)$, we aim at finding a value $\bound$ such that $T(n)\leq \bound^n$ for $n$ large enough. Notice that the graph that realizes the maximum $T(n)$ has to be a triangulation. Hence, it suffices to look at the subclass of all planar triangulations with $n$ vertices instead of considering all graphs in $\mathcal{G}_n$.

Furthermore, we are interested in the maximal number of spanning trees for planar graphs with special facial  structure. In particular, we want to bound
\begin{eqnarray*}
T_4(n)\!  & = &\! \max_{G\in\mathcal{G}_n}\{t(G) |  \text{$G$ is 3-connected and contains no triangle}\},\\
T_5(n)\! & = &\! \max_{G\in\mathcal{G}_n}\{t(G) |  \text{$G$ is 3-connected and contains no triangle or quadrilateral}\}.
\end{eqnarray*}

Notice that if a graph is planar and 3-connected its facial structure is uniquely determined~\cite{w-ig-31}. Let $\bound_4^n$ be an upper bound on $T_4(n)$ and $\bound_5^n$ be an upper bound on $T_5(n)$. We refer to the problem of bounding $\bound$ as the \emph{general problem}, and to the problems of bounding $\bound_4$ and $\bound_5$ as \emph{restricted problems}.

For embedding 3d polytopes
the necessary grid size (ignoring polynomial factors) can be expressed in terms of $\bound$, $\bound_4$  and $\bound_5$. In this scenario we are dealing with 3-connected planar graphs since $G$ is the graph of a 3d polytope~\cite{s-emw-22}. If the graph $G$ contains a triangle the grid size is in $O(\bound^{2n})$, if $G$ contains a quadrilateral the grid size is in $O({\bound_4}^{3n})$.  Due to Euler's formula every (3-connected) planar graph contains a pentagon -- in this case the grid size in $O({\bound_5}^{5n})$. As a consequence better bounds on $\bound$, $\bound_4$  and $\bound_5$ directly imply a better bound on the grid size needed to realize a polytope with integer coordinates.

Richter-Gebert used a bound on $T(n)$ to bound the size of the grid embedding of a 3d polytope~\cite{rg-rsp-96}. By applying Hadamard's inequality he showed that the cofactors of the Laplacian matrix of a planar graph are less than $6{.}5^n$. This bound can by easily improved to $6^n$ by noticing that the Laplacian matrix is positive semi-definite, which allows the application of the stronger version of Hadamard's inequality~\cite[page 477]{hj-ma-85}. Both bounds do not rely on the planarity of $G$, but on the fact that the sum of the vertex degrees  of $G$ is below $6n$.
Rib\'o and Rote improved Richter-Gebert's analysis and showed that  $5{.}0295^n\leq T(n) \leq 5{.}\bar{3}^n$~\cite{r-rcpps-06,r-nstpg-05}. The lower bound is realized on a wrapped up triangular grid  and was obtained by the transfer-matrix method.
For the upper bound they count the number of the spanning trees on the dual graph. This number coincides with the number of spanning trees in the original planar graph. Since  the number of spanning trees is maximized by a triangulation, the dual graph is $3$-regular. Applying a result of McKay~\cite{m-strg-83}, which bounds the number of spanning trees on $k$-regular graphs, yields the bound of $5{.}\bar{3}^n$. Interestingly, this bound is not directly related to the planarity of the graph. Therefore, Rib\'o and Rote tried to improve the bound using the \emph{outgoing edge approach}. The approach involves choosing a partial orientation of the graph and estimating the probability of a cycle. To handle dependencies between cycles, Rib\'o and Rote tried (1) selecting an independent subset of cycles, and (2) using Suen's inequality~\cite{s-ciplt-90}.
However, they could only prove an upper bound of $5{.}5202^n$ for $T(n)$, and they showed that their approach is not suitable to break the bound of $5{.}\bar{3}^n$. For the restricted problems they obtained the bounds $T_4(n)\leq 3{.}529^n$ and $T_5(n)\leq 2{.}847^n$.

Bounds for the number of spanning trees of general graphs are often expressed in terms of the vertex degree sequence of the graph. However, the main difficulty in obtaining good values for $\bound$ lies in the fact that we do not know the degree sequence of the graph in advance. Therefore, these bounds are not directly applicable. If we would assume that almost every vertex degree is $6$, which is true for the best known lower bound example presented in \cite{r-rcpps-06}, the bound of Grone and Merris~\cite{gm-bcsg-84} gives an upper bound for $T(n)$ of $(n/(n-1))^{n-1} 6^{n-1}/n$, whose asymptotic growth equals the growth rate obtained by Hadamard's inequality. To apply the more involved bound of Lyons~\cite{l-aest-03} one has to know the probabilities that a simple random walk returns to its start vertex after $k$ steps (for every start vertex). Even under the assumption that every vertex has degree $6$, it is difficult to express the  return probabilities in terms of $k$ to obtain an improvement over $6^n$.

Spanning trees are not the only interesting substructures that can be counted in planar graphs.  Aichholzer \emph{et al.}~\cite{ahhhk-npgg-07} list the known upper bounds for other subgraphs contained in a triangulation: Hamiltonian cycles, cycles, perfect matchings, connected graphs and so on. The bounds for Hamiltonian cycles and cycles have been recently improved~\cite{bkkss-ncpg-07}.

\paragraph{{\bf Overview.}}
In Section~\ref{sec:outgoing} we bound the number of spanning trees by the number of \emph{outdegree-one graphs}, i.e., the number of directed graphs obtained by picking for each vertex one outgoing edge. Cycle-free outdegree-one graphs correspond to spanning trees. Therefore we next bound the probability that a random outdegree-one graph has a cycle. For this we analyze the dependencies between cycles. In contrast to Rib\'o and Rote who showed how to avoid the dependencies in the analysis, we instead make use of the dependencies. Since our method might also find application in analyzing similar dependency structures, we phrase our probabilistic lemma in a more general setting in Section~\ref{subsec:dependencies1}. More specifically, we develop a framework to analyze a series of events for which dependent events are mutually exclusive. In Section~\ref{subsec:dependencies2} we apply this framework to bound the probability of the occurrence of a cycle. From this we derive in Sections~\ref{sec:charging} and~\ref{sec:constraints} a linear program whose objective function bounds (the logarithm of) the number of spanning trees. This linear program has infinitely many variables, and we instead consider the dual program with infinitely many constraints and present a solution in Section~\ref{sec:lp}.
%
\paragraph{\bf{Results.}}
We improve the upper bounds for the number of spanning trees of planar graphs by showing: $\bound\leq 5{.}2852$, $\bound_4\leq 3{.}4162$, and $\bound_5\leq 2{.}7156$. As a consequence the grid size needed to realize a 3d polytope with integer coordinates can now be bounded by  $O(147{.}7^n)$ instead of $O(188^n)$. For grid embeddings of simplicial 3d polytopes our results yield a small improvement to $O(27{.}94^n)$ over the old bound of $O(28{.}\bar 4^n)$.  Our improved bound for $\bound$ implies also an improved bound for the number of edge-unfolding cut-trees of a polyhedron of $n$ vertices~\cite{DDLO02}.

The maximal number of cycle-free graphs in a triangulation is another interesting quantity. Aichholzer \emph{et al.}~\cite{ahhhk-npgg-07} obtained an upper bound of $6{.}75^n$ for this number. We show in this paper 
that the improved bound for $T(n)$ yields an improved upper bound of $O(6.4948^n)$.

Multiplying $\bound$ with the number of maximal  number of triangulations a point set can have, gives an upper bound for the number of plane spanning trees on a point set. Using $30^n$ as an upper bound for the number of triangulations of a point set (obtained by Sharir and Sheffer~\cite{ss-ctpps-10}) yields an upper bound of $O(158.6^n)$ for the number of plane spanning trees on a point set. By the same construction the number of plane cycle-free graphs can be improved to $O(194{.}7^n)$. To our knowledge both bounds are the currently best known bounds.

\section{Refined outgoing edge approach}\label{sec:outgoing}
Our results are obtained by the \emph{outgoing edge approach} and its refinements. For this we consider each edge $vw$ of $G$ as a pair of directed arcs $v\to w$ and $w \to v$. Let $v_1$ be a designated vertex of $G$, and let $v_2,\ldots v_n$ be the remaining vertices.
A directed graph is called  \emph{outdegree-one}, if $v_1$ has no outgoing edge, and every remaining vertex is incident to exactly one outgoing edge. A spanning tree can be oriented as outdegree-one graphs by directing its edges towards $v_1$. This interpretation associates every spanning tree with exactly one outdegree-one graph. As a consequence the number of outdegree-one graphs contained in $G$ exceeds $t(G)$.

We can obtain all outdegree-one graphs by selecting for every vertex (except $v_1$) an edge as its outgoing edge. Let $\mathcal{S}$ be such a selection.
We denote with $d_i$ the degree of the  vertex $v_i$.  For every vertex $v_i$ we have $d_i$ choices how to select its outgoing edge. This gives us in total $\prod_{i=2}^n d_i$ different outdegree-one graphs in $G$. Due to Euler's formula the average vertex degree is less than $6$, and hence we have less than  $6^n$ outdegree-one graphs of $G$ by the geometric-arithmetic mean inequality. Thus, the outgoing edge approach gives the same bound as the strong Hadamard inequality by a very simple argument.

Outdegree-one graphs without cycles are exactly the (oriented) spanning trees of $G$. To improve the bound of $6^n$ we try to remove all graphs with cycles from our counting scheme.
Let us now consider a random selection $\mathcal{S}$ that picks the  outgoing edge for every vertex uniformly  at random. This implies that also the selected outdegree-one graph will be picked uniformly at random. Let $P_{\text{nc}}$ be the probability that the random graph selected by $\mathcal{S}$ contains no cycle. The exact number of spanning trees for any (not necessary planar) graph $G$ is given by
\begin{equation*}\label{eq:basic}
t(G)= \left( \prod_{i=2}^n d_i \right) P_{\text{nc}}.
\end{equation*}

\subsection{The dependencies of cycles in a random outdegree-one graph}\label{subsec:dependencies1}
Assume that the $t$ cycles contained in $G$ are  enumerated in some order. Notice that in an outdegree-one graph every cycle has to be directed. We consider the two orientations of a cycle with more than two vertices as one cycle.  Let $C_i$ be the  event that the  $i$-th cycle occurs and let $C_i^c$ be the event that the $i$-th cycle does not occur in a random outdegree-one graph. For events $C_i, C_j$ we denote that they are dependent by $C_i \leftrightarrow C_j$ and that they are independent by $C_i \not \leftrightarrow C_j$. We say that cycles are dependent (independent) if the corresponding events are dependent (independent).



Two cycles are independent if and only if they do not share a vertex. In turn, cycles that share a vertex are not only dependent but \emph{mutually exclusive}, i.e., they cannot occur both in an outdegree-one graph, since this would result in a vertex with two outgoing edges. This gives us the following two
properties of the events $C_i$. We say events $E_1, \ldots, E_l$ have \emph{mutually exclusive dependencies} if
$E_i \leftrightarrow E_j$ implies $\Pr [E_i \cap E_j]=0$. We say that events $E_1, \ldots, E_l$ have \emph{union-closed independencies} if $E_i \not \leftrightarrow E_{i_1}, \ldots, E_i \not \leftrightarrow E_{i_k}$ implies $E_i \not \leftrightarrow (E_{i_1} \cup \ldots \cup E_{i_k})$. It is easy to see that the events $C_i$ have mutually exclusive dependencies and union-closed independencies.
\begin{lemma}\label{lem:prob2}
 If  events $E_1, \ldots, E_l$ have mutually exclusive dependencies and union-closed independencies then for $1 < k <l$
 \[
  \Pr[\bigcap_{j=k}^{l} E_j^c \mid \bigcap_{i=1}^{k-1} E_i^c] \leq
 \prod_{j=k}^l \left( 1 - \frac{\Pr[E_j]}
 {
  \underset{\underset{E_i \leftrightarrow E_j}{1 \leq i < k:}}{\prod} \Pr [E_i^c]
 \sqrt{\underset{\underset{E_i \leftrightarrow E_j}{k \leq i \leq l:}}{\prod} \Pr [E_i^c]}
 }\right) .
 \]
\end{lemma}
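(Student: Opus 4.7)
My plan is to expand the target probability by the chain rule and then analyse each factor with the two structural hypotheses. Writing
\[
\Pr\Bigl[\bigcap_{j=k}^{l} E_j^c \;\Big|\; \bigcap_{i=1}^{k-1} E_i^c\Bigr] = \prod_{j=k}^{l} \Pr\Bigl[E_j^c \;\Big|\; \bigcap_{i=1}^{j-1} E_i^c\Bigr],
\]
I fix $j$ and split $\{1,\ldots,j-1\}$ into $D_j^{<j} = \{i<j : E_i \leftrightarrow E_j\}$ and its complement $I_j^{<j}$. The mutually exclusive dependencies hypothesis yields $E_j \subseteq E_i^c$ for every $i \in D_j^{<j}$, so the conditioning on $D_j^{<j}$ becomes redundant once one intersects with $E_j$; the union-closed independencies hypothesis then lets $E_j$ pass through the intersection on $I_j^{<j}$, because $E_j$ is independent of $\bigcup_{i \in I_j^{<j}} E_i$ and hence of the complement. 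Combining these two reductions,
\[
\Pr\Bigl[E_j^c \;\Big|\; \bigcap_{i=1}^{j-1} E_i^c\Bigr] = 1 - \frac{\Pr[E_j]}{\Pr\bigl[\bigcap_{i \in D_j^{<j}} E_i^c \;\big|\; \bigcap_{i \in I_j^{<j}} E_i^c\bigr]},
\]
so the lemma reduces to an upper bound on the denominator whose product over $j$ matches the right-hand side of the statement.

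My first stage is the \emph{asymmetric} bound
\[
\Pr\Bigl[\bigcap_{i \in D_j^{<j}} E_i^c \;\Big|\; \bigcap_{i \in I_j^{<j}} E_i^c\Bigr] \leq \prod_{i \in D_j^{<j}} \Pr[E_i^c].
\]
Its seed is that mutually exclusive events are negatively correlated: $E_a \leftrightarrow E_b$ gives $\Pr[E_a^c \cap E_b^c] = 1 - p_a - p_b \leq q_a q_b$. To lift this to the multi-event conditional setting I would peel elements of $D_j^{<j}$ one at a time, applying the two structural hypotheses locally at every step to the reduced event list. Plugging the asymmetric bound into the chain-rule product yields a bound of the shape $\prod_j \bigl(1 - \Pr[E_j]/(\prod_{i<k,\,i\leftrightarrow j}q_i \cdot \prod_{k\leq i<j,\,i\leftrightarrow j} q_i)\bigr)$, in which, for each dependent pair $\{i,j\} \subseteq [k,l]$ with $i<j$, the factor $q_i$ sits only in the denominator of the later ($j$-th) factor.

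My second stage is a symmetrization that converts this asymmetric bound into the square-rooted form of the lemma. For each mutually dependent pair $\{i,j\} \subseteq [k,l]$, the idea is to redistribute the single $q_i$-factor in the $j$-th denominator as $\sqrt{q_i}$ in the $j$-th denominator together with a compensating $\sqrt{q_j}$ in the $i$-th denominator, via an AM-GM-style exchange on the two corresponding factors of the chain product. Iterating this exchange across all dependent pairs in $[k,l]$ should produce the symmetric denominator of the statement, while dependencies to indices $i<k$ stay untouched and contribute the full $q_i$, matching the lemma.

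The main obstacle lies in both stages. The asymmetric bound requires a careful induction because the events in $D_j^{<j}$ need not be pairwise dependent among themselves, so the negative correlation has to be invoked pair-by-pair against $E_j$ rather than within $D_j^{<j}$; keeping the inductive hypothesis in the class handled by the lemma while peeling is the delicate part. For the symmetrization, the per-pair AM-GM exchange is \emph{not} a pointwise inequality on the factors $(1 - \Pr[E_j]/\cdot)$ — it can reverse direction in certain parameter regimes — so it cannot simply be iterated in sequence; the redistribution must be phrased as a joint inequality on the full product, with the dependency graph's structure guiding a globally consistent matching of exchanges. Handling an event that participates in many dependent pairs simultaneously is the bookkeeping-heavy part of the argument.
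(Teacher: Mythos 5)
Your first stage is sound and is essentially the paper's own route: the chain--rule factorization, the identity
\[
\Pr\Bigl[E_j^c \;\Big|\; \bigcap_{i<j} E_i^c\Bigr] \;=\; 1-\frac{\Pr[E_j]}{\Pr\bigl[\bigcap_{i\in D}E_i^c \mid \bigcap_{i\in I}E_i^c\bigr]},
\]
and the peeling argument bounding the denominator by $\prod_{i\in D}\Pr[E_i^c]$ (the paper isolates this as a separate lemma, proved by repeatedly splitting off the largest-index dependent event and using $\Pr[E^c\mid\cdots]\le\Pr[E^c]$). This yields exactly the asymmetric product you describe.

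The gap is in your second stage, and you have in fact put your finger on it yourself without closing it. The symmetric bound of the lemma is \emph{not} in general an upper bound for the single asymmetric product: for each dependent pair $\{i,j\}\subseteq[k,l]$ the total exponent of the $q$'s is conserved under your redistribution, but $\prod_j(1-a_j/b_j)\le\prod_j(1-a_j/c_j)$ with $\prod_j b_j=\prod_j c_j$ is simply not a theorem, and no ``globally consistent matching of exchanges'' on that one product will make it one. The missing idea is to produce a \emph{second} asymmetric bound and compare the symmetric expression to the geometric mean of the two. Concretely: the left-hand side $\Pr[\bigcap_{j=k}^{l}E_j^c\mid\bigcap_{i<k}E_i^c]$ is invariant under reordering $E_k,\ldots,E_l$, so running your stage--one argument once in the original order and once in the reversed order bounds the \emph{square} of the left-hand side by
\[
\prod_{j=k}^{l}\Bigl(1-\frac{\Pr[E_j]}{b_j}\Bigr)\prod_{j=k}^{l}\Bigl(1-\frac{\Pr[E_j]}{c_j}\Bigr),
\]
where $b_j$ collects the dependent $q_i$ with $k\le i<j$ and $c_j$ those with $j<i\le l$ (both times the full product over $i<k$). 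Now the AM--GM step \emph{is} a legitimate pointwise inequality, because the two factors being paired share the same numerator: $(1-a/b)(1-a/c)\le(1-a/\sqrt{bc})^2$ holds for all positive $a,b,c$, and $\sqrt{b_jc_j}$ is precisely the denominator in the statement. Taking square roots finishes the proof. Without the order-reversal your symmetrization has no valid starting point, so as written the proposal does not go through.
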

Before we prove Lemma~\ref{lem:prob2} we prove the following statement.
\begin{lemma}\label{lem:prob1}
 If events $E_1, \ldots, E_l$ have mutually exclusive dependencies and union-closed independencies then
 \[
 \Pr[E_l^c \mid \bigcap_{i=1}^{l-1} E_i^c ] \leq 1 - \Pr[E_l]/ \underset{\underset{E_i \leftrightarrow E_l}{1 \leq i < l:}}{\prod} \Pr [E_i^c].
 \]
\end{lemma}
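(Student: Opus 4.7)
The plan is to rewrite the conditional probability as an explicit ratio, reducing the lemma to a single inequality on $\Pr[A\mid B]$, which I would then establish by induction on $|D|$. Partition the indices $1,\ldots,l-1$ into $D=\{i:E_i\leftrightarrow E_l\}$ and $I=\{i:E_i\not\leftrightarrow E_l\}$, and set $A=\bigcap_{i\in D}E_i^c$ and $B=\bigcap_{i\in I}E_i^c$, so that the conditioning event is $\bigcap_{i<l}E_i^c = A\cap B$.

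By mutually exclusive dependencies, $E_l\cap E_i=\emptyset$ for each $i\in D$, so $E_l\subseteq A$ and hence $E_l\cap A\cap B = E_l\cap B$. Union-closed independencies applied to $\{E_i\}_{i\in I}$ make $E_l$ independent of $\bigcup_{i\in I}E_i$, hence of its complement $B$. Consequently
\[
\Pr[E_l \mid A\cap B] \;=\; \frac{\Pr[E_l]\,\Pr[B]}{\Pr[A\cap B]} \;=\; \frac{\Pr[E_l]}{\Pr[A\mid B]},
\]
and the lemma reduces to the key inequality $\Pr[A\mid B]\le\prod_{i\in D}\Pr[E_i^c]$.

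I would prove this inequality by induction on $|D|$. The base case $|D|=0$ is trivial. For the inductive step, pick $j\in D$, set $D'=D\setminus\{j\}$, and apply the chain rule:
\[
\Pr[A\mid B] \;=\; \Pr\!\bigl[E_j^c\,\bigm|\,\textstyle\bigcap_{i\in D'}E_i^c\cap B\bigr]\cdot \Pr\!\bigl[\textstyle\bigcap_{i\in D'}E_i^c\,\bigm|\,B\bigr].
\]
The induction hypothesis handles the second factor. For the first, it suffices to show the one-step monotonicity $\Pr[E_j\mid\bigcap_{i\in D'\cup I}E_i^c]\ge\Pr[E_j]$, so that $\Pr[E_j^c\mid\cdots]\le\Pr[E_j^c]$; multiplying the two bounds yields $\prod_{i\in D}\Pr[E_i^c]$ as required.

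That monotonicity step is the heart of the argument. To prove it I would split $D'\cup I$ according to dependence on $E_j$: let $J^\perp=\{i\in D'\cup I:E_i\not\leftrightarrow E_j\}$. Mutually exclusive dependencies give $E_j\cap\bigcap_{i\in D'\cup I}E_i^c = E_j\cap\bigcap_{i\in J^\perp}E_i^c$, and union-closed independencies make $E_j$ independent of $\bigcap_{i\in J^\perp}E_i^c$. Therefore
\[
\Pr\!\bigl[E_j\,\bigm|\,\textstyle\bigcap_{i\in D'\cup I}E_i^c\bigr] \;=\; \Pr[E_j]\cdot\frac{\Pr[\bigcap_{i\in J^\perp}E_i^c]}{\Pr[\bigcap_{i\in D'\cup I}E_i^c]} \;\ge\; \Pr[E_j],
\]
the last inequality being immediate from $J^\perp\subseteq D'\cup I$ (dropping constraints can only increase the probability of an intersection of complements). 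Both structural hypotheses are genuinely used in this one monotonicity observation; the rest is algebraic bookkeeping, so I expect this to be the only nontrivial part of the proof.
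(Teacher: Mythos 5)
Your proposal is correct and follows essentially the same route as the paper: both derive the exact identity $\Pr[E_l \mid \bigcap_{i<l}E_i^c]=\Pr[E_l]/\Pr[A\mid B]$ by using mutual exclusivity to discard the dependent constraints and union-closed independence to factor out $\Pr[E_l]$, and then bound $\Pr[A\mid B]\le\prod_{i\in D}\Pr[E_i^c]$ by peeling off one dependent event at a time via the one-step bound $\Pr[E_j^c\mid\cdots]\le\Pr[E_j^c]$. Your write-up is in fact slightly more careful than the paper's, which invokes its inequality~\eqref{eq:simpledep} (stated only for $E_l$) where the general one-step monotonicity you spell out is what is actually needed.
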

\begin{proof}
We first prove
\begin{equation}\label{eq:exactdep}
\Pr[E_l^c \mid \bigcap_{i=1}^{l-1} E_i^c ] = 1 - \Pr[E_l]/ \Pr[ \underset{\underset{E_i \leftrightarrow E_l}{1 \leq i < l:}}{\bigcap} E_i^c \mid
\underset{\underset{E_i \not \leftrightarrow E_l}{1 \leq i < l:}}{\bigcap} E_i^c].
\end{equation}
The mutually exclusive dependencies imply
\[
\Pr[E_l \mid \bigcap_{i=1}^{l-1} E_i^c ] = \Pr[E_l \cap \bigcap_{i=1}^{l-1} E_i^c ]/\Pr[\bigcap_{i=1}^{l-1} E_i^c ]=\Pr[E_l \cap \underset{E_i \not \leftrightarrow E_l}{\bigcap_{i=1,\ldots,l-1:}} E_i^c ]/\Pr[\bigcap_{i=1}^{l-1} E_i^c ].
\]
The union-closed independencies imply
\[Pr[E_l \cap \underset{E_i \not \leftrightarrow E_l}{\bigcap_{i=1,\ldots,l-1:}} E_i^c ]=\Pr[E_l] \Pr [\underset{E_i \not \leftrightarrow E_l}{\bigcap_{i=1,\ldots,l-1:}} E_i^c ],\]
which by regrouping terms implies equation~\eqref{eq:exactdep}.

A simple consequence of equation~\eqref{eq:exactdep} is that
\begin{equation}\label{eq:simpledep}
\Pr[E_l^c \mid \bigcap_{i=1}^{l-1} E_i^c ] \leq 1 - \Pr[E_l] = \Pr[E_l^c]
\end{equation}

It remains to prove that
\begin{equation*}
\Pr[ \underset{\underset{E_i \leftrightarrow E_l}{1 \leq i < l:}}{\bigcap} E_i^c \mid
\underset{\underset{E_i \not \leftrightarrow E_l}{1 \leq i < l:}}{\bigcap} E_i^c]
\leq
\Pr[
\underset{\underset{E_i \leftrightarrow E_l}{1 \leq i < l:}}{\prod} E_i^c
].
\end{equation*}
Let $i' := \max \{ 1\leq i <l \mid E_i \leftrightarrow E_l \}$. Now,
\[
\Pr[ \underset{\underset{E_i \leftrightarrow E_l}{1 \leq i < l:}}{\bigcap} E_i^c \mid
\underset{\underset{E_i \not \leftrightarrow E_l}{1 \leq i < l:}}{\bigcap} E_i^c]
=
\Pr[ E_{i'}^c \mid
\underset{\underset{i \neq i'}{1 \leq i < l:}}{\bigcap} E_i^c]
\Pr[ \underset{\underset{E_i \leftrightarrow E_l}{1 \leq i < i':}}{\bigcap} E_i^c \mid
\underset{\underset{E_i \not \leftrightarrow E_l}{1 \leq i < l:}}{\bigcap} E_i^c].
\]
Thus, by applying Inequality~\eqref{eq:simpledep} an induction yields the claim.

\end{proof}

\paragraph{Proof of Lemma~\ref{lem:prob2}.}
After rewriting $\Pr[\bigcap_{j=k}^{l} E_j^c \mid \bigcap_{i=1}^{k-1} E_i^c] = \Pr[E_l \mid \bigcap_{i=1}^{l-1} E_i^c] \Pr[\bigcap_{j=k}^{l-1} E_j^c \mid \bigcap_{i=1}^{k-1} E_i^c]$ we obtain by Lemma~\ref{lem:prob1}
\begin{equation}\label{eq:depexpanded1}
\Pr[\bigcap_{j=k}^{l} E_j^c \mid \bigcap_{i=1}^{k-1} E_i^c] =
\prod_{j=k}^l 1 - \frac{\Pr[E_j]}
 {
  \underset{\underset{E_i \leftrightarrow E_j}{1 \leq i < k:}}{\prod} \Pr [E_i^c]
  \underset{\underset{E_i \leftrightarrow E_j}{k \leq i < j:}}{\prod} \Pr [E_i^c]
 }
\end{equation}

The left hand side of equation~\eqref{eq:depexpanded1} does not depend on the order of the events $E_k, \ldots E_l$. Considering once the original order and once the reversed order of these events yields that $\Pr[\bigcap_{j=k}^{l} E_j^c \mid \bigcap_{i=1}^{k-1} E_i^c]^2$ equals
\[
\prod_{j=k}^l 1 - \frac{\Pr[E_j]}
 {
  \underset{\underset{E_i \leftrightarrow E_j}{1 \leq i < k:}}{\prod} \Pr [E_i^c]
  \underset{\underset{E_i \leftrightarrow E_j}{k \leq i < j:}}{\prod} \Pr [E_i^c]
 }
 \prod_{j=k}^l 1 - \frac{\Pr[E_j]}
 {
  \underset{\underset{E_i \leftrightarrow E_j}{1 \leq i < k:}}{\prod} \Pr [E_i^c]
  \underset{\underset{E_i \leftrightarrow E_j}{j < i \leq l:}}{\prod} \Pr [E_i^c]
 }.
\]

Thus, to prove the lemma it suffices to see that for any positive reals $a,b,c$,
$(1-a/b)(1-a/c) \leq (1-a/\sqrt{bc})^2$. By using $a/\sqrt{bc} \leq (a/b+a/c)/2$, i.e., the geometric mean of two positive numbers
is at most the arithmetic mean, we get,
\[
(1-a/b)(1-a/c) = 1 + \frac{a^2}{bc} - a/b -a/c \leq 1 + \frac{a^2}{bc} -2 a/\sqrt{bc} = (1-a/\sqrt{bc})^2.
\]
\endproof

\subsection{Bounding the probability of the appearance of cycles}\label{subsec:dependencies2}

Before estimating the probability $P_{\text{nc}}$ in terms of the vertex degrees, we introduce some notation.
A cycle of length $k$ is called a $k$-\emph{cycle}. The $k$-\emph{extension} of a cycle is the union of a cycle with all its dependent $k$-cycles. We say that the degree of a cycle is the ordered sequence of the degrees of its vertices. Let $C_{abc}$ be a $3$-cycle spanned by $v_a,v_b,v_c$, and let the degree of $C_{abc}$ be $(d_a, d_b ,d_c)=(i,j,k)$. We denote the degrees of the vertices adjacent to $v_a$ that are not part of $C_{abc}$ by the sequence $A$. In the same fashion we denote the degrees of the vertices around $v_b$ by $B$ and the around $v_c$ by $C$. The ordering in $A,B,C$ respects the counter clockwise ordering of the vertices around $v_a,v_b,v_c$ in a planar embedding. Since $G$ is planar and 3-connected  the ordering of the sequences is uniquely determined up to a global reflection~\cite{w-ig-31}. Notice that a vertex might occur in two different sequences. We call the tuple $(i,j,k,A,B,C)$, the \emph{signature} of the $2$-extension of $C_{abc}$. Similarly, we define the signature of a $2$-extension of a $2$-cycle $C_{ab}$ by the tuple $(i,j,A,B)$.
The naming convention is depicted in Figure~\ref{fig:signature}.
\begin{figure}[htbp]
\begin{center}
\begin{tabular}{cp{2cm}c}
  \includegraphics[width=.35\columnwidth]{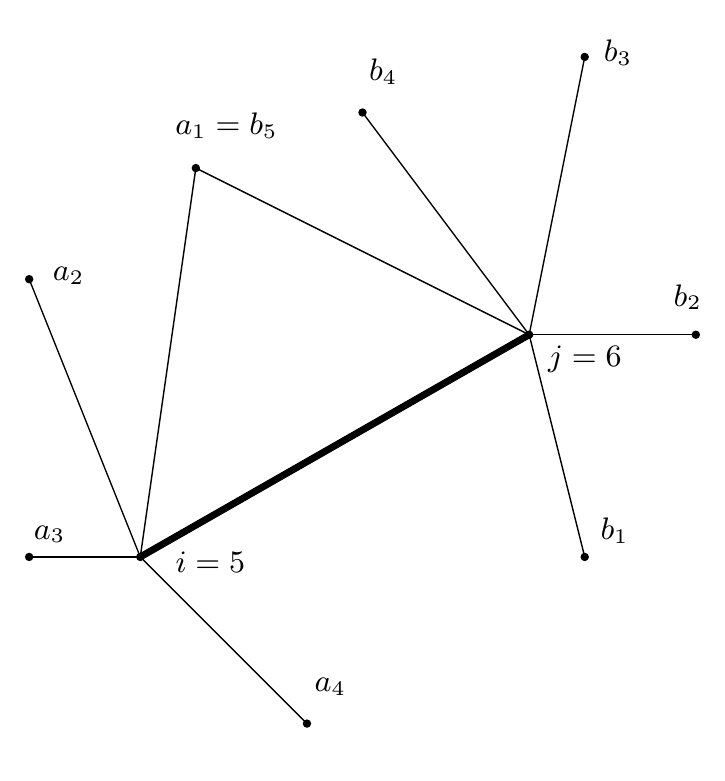} & &
  \includegraphics[width=.35\columnwidth]{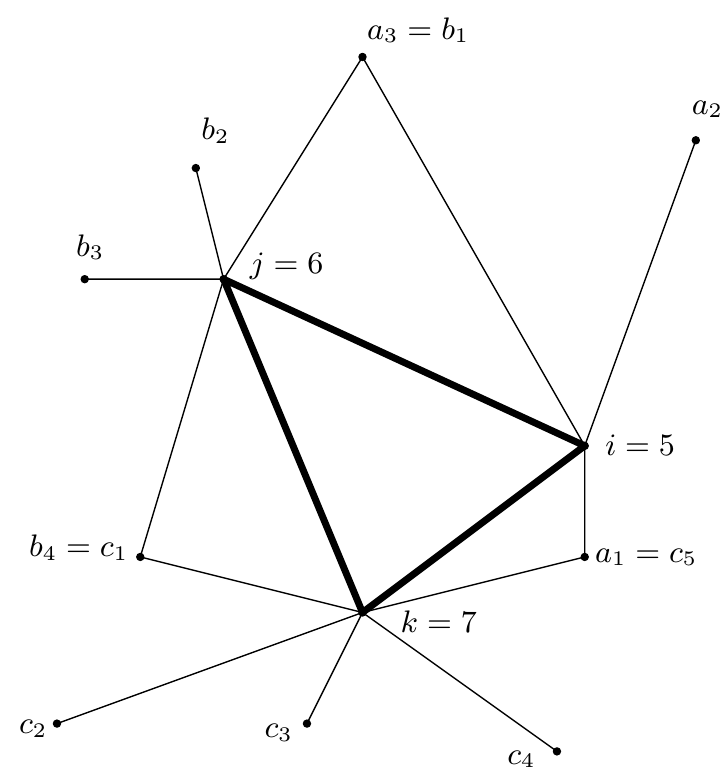} \\
  $(5,6,(a_1,\dotsc ,a_4),(b_1,\dotsc ,b_5))$ &&    $(5,6,7,(a_1,\dotsc ,a_3),(b_1,\dotsc, b_4),(c_1,\dotsc ,c_5))$
\end{tabular}
\caption{Convention for naming the signatures of $2$-extensions of $2$-cycles (on the left) and  $3$-cycles (on the right). } 
\label{fig:signature}
\end{center}
\end{figure}

We can express  $P_{\text{nc}}$ as $\Pr[\bigcap_{j=1}^t C_i^c]$. Our goal is to apply Lemma~\ref{lem:prob2} to bound this probability. As a first step we discuss how to express the number of spanning trees $t(G)$ when the distribution of signatures  of $G$ is known. Instead of $t(G)$ we bound its logarithm, i.e.,
\begin{equation}\label{eq:simple}
\log t(G)=   \sum_{i=2}^n \log d_i + \log \Pr[\bigcap_{j=1}^t C_i^c].
\end{equation}
The probability that an event $C_i$ occurs can be expressed in terms of vertex degrees. In particular,
\begin{equation*}
\begin{array}{rcll}
\Pr[C_i] &=& 1/ (d_a d_b) &\quad \text{the $i$-th cycle is a $2$-cycle on the vertices   $v_av_b$,} \\
\Pr[C_j] &=& 2/\prod_{a\colon v_a \in Z} d_a &\quad \text{the $j$-th cycle is at least a $3$-cycle on the set $Z$.}
\end{array}
\end{equation*}

The way we proceed depends on whether we are addressing the general problem (i.e., we want to bound $\bound$) or one of the restricted problems (i.e., we want to bound $\bound_4$ or $\bound_5$). In the latter case we limit our analysis to $2$-cycles only. In the general case we consider all cycles of length $2$ and cycles of length $3$ that are triangles in $G$.

We start with the general problem. Assume that all cycles $C$ are enumerated  such that the first $t_3$ cycles are the  triangles in $G$, and the last $t_2$ cycles are the $2$-cycles of $G$. In total we consider $t:=t_2+t_3$ cycles. All remaining cycles are ignored. Discarding the larger cycles gives an upper bound on $P_{\text{nc}}$ and is therefore applicable. We apply Lemma~\ref{lem:prob2} with $k=1$ and $l=t_3$ to bound $\Pr[\bigcap_{j=1}^{t_3} C_j^c]$, which is the probability that no $3$-cycle occurs. To take also the $2$-cycles into account we consider the probability that no $2$-cycle occurs under the condition that no triangle occurred as $3$-cycle, which is $\Pr[\bigcap_{j={t_3+1}}^t C_j^c | \bigcap_{j=1}^{t_3} C_j^c]$. Notice that this probability has the form stated in Lemma~\ref{lem:prob2} for $l=t$ and $k=t_3+1$. Thus, we can bound $\log \Pr[\bigcap_{j=1}^t C_j^c]$ from above by
\begin{equation}
\label{eq:loglong}
 \sum_{j=1}^{t_3}\log\Bigg( 1 - \frac{\Pr[C_j]}
 {
 \sqrt{\underset{\underset{C_i \leftrightarrow C_j}{1 \leq i \leq t_3:}}{\prod} \Pr [C_i^c]}
 }\Bigg)+\hi
 \sum_{j=t_3+1}^t \hi \log\Bigg( 1 - \frac{\Pr[C_j]}
 {
  \underset{\underset{C_i \leftrightarrow C_j}{1 \leq i < t_3+1:}}{\prod} \Pr [C_i^c]
 \sqrt{\underset{\underset{C_i \leftrightarrow C_j}{t_3 < i \leq t:}}{\prod} \Pr [C_i^c]}
 }\Bigg).
\end{equation}
Equation~\eqref{eq:loglong} is a  sum over cycles. Each summand in this sum depends only on the signature of the $2$-extension of such cycle. Hence, we can group the summands in  \eqref{eq:loglong} with identical signatures. We denote the number of $2$-extensions of $2$-cycles with signature $(i,j,A,B)$ by the variable $f_{ij}(A,B)$. Similarly, the number of $2$-extensions of $3$-cycles with signature $(i,j,k,A,B,C)$ is denoted by $f_{ijk}(A,B,C)$. In order to simplify matters, we refer to $f_{ij}(A,B)$ and $f_{ijk}(A,B,C)$ simply as $f_{ij}$ and $f_{ijk}$, or as $f$ variables.

For better readability we introduce the following notations ($X$ is used as a placeholder for $A,B,$ or $C$, and $x$ as a placeholder for $a,b,$ or $c$):
\begin{eqnarray*}
P_2(r,X) & := &  \prod\limits_{1\leq p\leq r-1} \left( 1 - \frac{1}{r x_p} \right), \quad P_3(r,X)  :=   \prod\limits_{1\leq p\leq r-2} \left( 1 - \frac{2}{r x_p x_{p+1}} \right), \\
P_{ij}(A,B)& :=&  1 - \frac{1} {ij P_3(i,A) P_3(j,B)  \Big( 1- \frac{2}{i j a_1} \Big) \Big(1 - \frac{2}{i j b_1}\Big) \sqrt{P_2(i,A) P_2(j,B)}  }, \\
P_{ijk}(A,B,C)& :=&  1\! -\! \frac{2} {ijk \sqrt{\!P_3(i,A)  P_3(j,B)  P_3(k,C)\! \Big( 1 \!- \! \frac{2}{i k a_1} \Big)\! \Big(1 \! -\! \frac{2}{i j b_1}\Big)\! \Big(1\! -\! \frac{2}{j k c_1}\Big) } }. \\
\end{eqnarray*}
We rephrase \eqref{eq:loglong} as
\begin{equation}
\begin{split}\label{eq:loglongdeg}\log \Pr[\bigcap_{j=1}^t C_j^c]
\leq&\hiii
 \sum_{i,j,k,A,B,C}\hiii f_{ijk}(A,B,C)  \log P_{ijk}(A,B,C)
 + \hii\sum_{i,j,A,B} \hi f_{ij}(A,B)  \log P_{ij}(A,B).
\end{split}
\end{equation}
The sums in the last expression  (and following similar sums) range over all feasible signatures.
Let us now consider the restricted problems. Both restricted problems are easier to analyze than the general problem, since we consider only $2$-cycles. To bound $\Pr[\bigcap_{j=1}^{t_2} C_j^c]$ we apply Lemma~\ref{lem:prob2} with $k=1$ and $l=t_2$.
Following the presentation of the general problem we define
\begin{eqnarray*}
\hat P_{ij}(A,B)& :=&  1 - \frac{1} {ij   \sqrt{P_2(i,A) P_2(j,B)}  }, \\
\end{eqnarray*}
and obtain for the restricted problems
\begin{equation}
\label{eq:loglongdegR}\log \Pr[\bigcap_{j=1}^{t_2} C_j^c]
\leq
  \sum_{i,j,A,B} f_{ij}(A,B)  \log \hat P_{ij}(A,B).
 \end{equation}

\subsection{A charging scheme for the vertex degrees}\label{sec:charging}

If we insert the bounds~\eqref{eq:loglongdeg} or \eqref{eq:loglongdegR} into equation~\eqref{eq:simple} we obtain an upper bound for $t(G)$ in terms of the signatures of $G$. However, we would like to express the first part of equation~\eqref{eq:simple}, which is  $D:=\sum_{i=1}^n \log d_i$, also in terms of the $f$ variables. For convenience we include $\log d_1$ in the sum for $D$, which is applicable since we are looking for an upper bound.

Let us first  discuss the general problem.
We split $D$ into four parts: $D_i:=\mu_i D$ for $i=1,\ldots ,4$, with $\sum_{i=1}^4 \mu_i=1$. The parameters $\mu_i$ will be fixed later.
 We express $D_1$ and $D_2$ by the $f_{ij}$ variables and   $D_3$ and $D_4$ by the $f_{ijk}$ variables.
Every vertex $v_a$ contributes   $\mu_1 \log d_a$ to  $D_1$. On the other hand, every vertex $v_a$ is part of $d_a$ $2$-cycles. We charge the total amount of $\mu_1 \log d_a$ uniformly  to these $2$-cycles. Thus, every $2$-cycle incident to  $v_a$ gets $\mu_1 \log d_a / d_a$ from $v_a$.
In a similar fashion we charge $D_2$ to the $2$-extension of  $2$-cycles. Let  $v_a v_b$ be an edge in $G$ and let $v_r\not=v_b$ be a  vertex adjacent to $v_a$. Distributing $\mu_2 \log  d_r$  uniformly, assigns every $2$-extension with ``endpoint'' $v_r$ the fraction of $\mu_2 \log  d_r / ( d_r(d_a -1))$ from $ v_r$. For $D_3$ and $D_4$ we argue analogously. We can therefore express $D$ by
\begin{equation}\label{eq:D}
\begin{split}
D_1=&  \mu_1 \sum_{i,j,A,B} f_{ij}(A,B)  \Big(\frac{\log i}{i}+\frac{\log j}{j}\Big), \\
D_2 =& \mu_2 \sum_{i,j,A,B} f_{ij}(A,B) \Big( \sum_{a_r \in A} \frac{\log a_r}{a_r(i-1)}+\sum_{b_r \in B} \frac{\log b_r}{b_r(j-1)}\Big), \\
D_3=&  \mu_3 \sum_{i,j,k,A,B,C} f_{ijk}(A,B,C)  \Big(\frac{\log i}{i}\!+\!\frac{\log j}{j}\!+\!\frac{\log k}{k}\Big), \\
D_4 =& \mu_4 \hiii \sum_{i,j,k,A,B,C} \hiii f_{ijk}(A,B,C) \Big( \sum_{a_r \in A} \frac{\log a_r}{a_r(i-1)}\!+\hi\sum_{b_r \in B} \frac{\log b_r}{b_r(j-1)}\!+\hi\sum_{c_r \in C} \frac{\log c_r}{c_r(k-1)}\Big).
\end{split}
\end{equation}
We can now express $\log P_{\text{nc}}$  as sum over all signatures. This sum can be subdivided into one part that contains the $f_{ij}$ variables and one part that contains the $f_{ijk}$ variables.
The part that considers the $2$-cycles is given by
\begin{align*}
 D1+D2  +\sum_{i,j,A,B} f_{ij}(A,B)  \log P_{ij}(A,B),  \tag{G2}\label{eq:G2} \\
\end{align*}
and the part that considers the $3$-cycles is given by
\begin{align*}
 D3+D4  +  \sum_{i,j,k,A,B,C} f_{ijk}(A,B,C)  \log P_{ijk}(A,B,C).  \tag{G3}\label{eq:G3} \\
\end{align*}
For the restricted problems we only have $2$-cycles. Using bound~\eqref{eq:loglongdegR} and setting $\mu_3=\mu_4=0$, we can bound the number of spanning trees by
\begin{align*}
 D1+D2  +\sum_{i,j,A,B} f_{ij}(A,B)  \log\hat P_{ij}(A,B)  \tag{R2}\label{eq:R2}.
\end{align*}

\subsection{Finding constraints}\label{sec:constraints}
In this section we construct \emph{necessary} conditions for the $f$  variables that have to hold for planar graphs with  $n$ vertices. We reuse the ideas from  the charging scheme in Section~\ref{sec:charging}. Instead of giving every vertex $\log d_i$ to distribute, we assign to every vertex  an amount of~$1$. This gives us a total of $n$ units. Following the construction of the equations of \eqref{eq:D} we obtain
\begin{flalign*}
 \sum_{i,j,A,B} f_{ij}(A,B)  \Big(\frac{1}{i}+\frac{1}{j}\Big)&=n, \tag{A2} \label{A2} \\
\sum_{i,j,k,A,B,C} f_{ijk}(A,B,C)  \Big(\frac{1}{i}+\frac{1}{j}+\frac{1}{k}\Big)& =n,   \tag{A3} \label{eq:A3}  \\
 \sum_{i,j,A,B} f_{ij}(A,B) \Big( \sum_{a_r \in A} \frac{1}{a_r(i-1)}+\sum_{b_r \in B} \frac{1}{b_r(j-1)}\Big)&=n,   \tag{B2} \label{eq:B2} \\
\hii \sum_{i,j,k,A,B,C}\hiii f_{ijk}(A,B,C) \Big(  \! \sum_{a_r \in A} \frac{1}{a_r(i-1)} \!+ \!\sum_{b_r \in B} \frac{1}{b_r(j-1)}\!+\!\sum_{c_r \in C} \frac{1}{c_r(k-1)}\Big)& =n   \tag{B3} \label{eq:B3} .
\end{flalign*}
Another set of constraints is given by the number of $2$-cycles and $3$-cycles a planar graph can have, which is related to the number of edges and faces of $G$. Every $2$-cycle is counted by some $f_{ij}$ variable, hence the sum over all $f_{ij}$ equals the number of edges, which we name $m$. Since we consider only $3$-cycles of triangles, the sum of the $f_{ijk}$ variables equals the number of triangles, which for a planar graph is at most $2n$.  We obtain
\begin{align*}
 \sum_{i,j,A,B} f_{ij}(A,B) & \leq m, \tag{C2} \label{C2} \\
\sum_{i,j,k,A,B,C} f_{ijk}(A,B,C)& \leq 2n.   \tag{C3} \label{eq:C3}  \\
\end{align*}
For the general case we have $m \leq 3n$, for the restricted case where quadrilaterals are allowed we have $m \leq 2n$, and for the remaining case we have $m\leq 5n/3 $. All these bounds can be obtained by a simple double counting argument using Euler's formula. As trivial condition we restrict the $f$ variables to be non-negative.

The constraints so far might be fulfilled by a signature that does not come from a planar graph. In particular, the degree sequence of the graph induced by the cycles might be unrelated to the degree sequence of the graph induced by the $2$-extensions. To overcome this ambiguity we consider the number of edges with vertex degree $i$ at one vertex and degree $j$ at the other. Let this number be $n_{ij}$. Clearly, we have that $n_{ij}=\sum_{A,B} f_{ij}(A,B)$, where the sum ranges about all feasible sequences $A,B$. On the other hand, $n_{ij}$ can be counted by its appearances in the $2$-extensions of $2$-cycles. Every edge with degree $(i,j)$ will show up in $(i-1)+(j-1)$  $2$-extensions. Let $\chi_i(X)$ denote the number of appearances of $i$ in the sequence $X$. We can express $((i-1)+(j-1))n_{ij}$ as
$\sum_{k,A,B} f_{ik}(A,B) \chi_j(A) + \sum_{k,A,B} f_{kj}(A,B) \chi_i(B). $
This leads us to a new constraint of the form
\begin{align*}
(i+j-2) \sum_{A,B} f_{ij}(A,B) & = \sum_{k,A,B} f_{ik}(A,B) \chi_j(A) + \sum_{k,A,B} f_{kj}(A,B) \chi_i(B). \label{eq:Eij} \tag{E$ij$}
\end{align*}
In  the case where the smallest face of the graph is a pentagon, we were able to improve the solution of the linear program  by adding the constraint (E33). Other constraints of the form \eqref{eq:Eij} gave no improvement.

\section{Solving the linear programs}\label{sec:lp}
For the general case and a fixed $n$ we can solve the  problem of maximizing $t(G)$ while fulfilling the constraints of Section~\ref{sec:constraints} by solving two linear programs. The first problem, which is derived from the $2$-cycles, is given by the objective function \eqref{eq:G2}, and the constraints (A2-C2). The problem associated with the $3$-cycles consists of the objective function \eqref{eq:G3}, and the constraints (A3-C3). Our goal, however, is not to solve these problems for a fixed $n$, but to study the solution in terms of $n$. To get an expression in terms of  $n$, we normalize  the $f$ variables. Instead of letting $f$ count absolute numbers, we consider $f$ as ratio between these absolute numbers and $n$. This allows us to cancel $n$ in every constraint and objective function.

By removing $n$ we lost a natural bound on the largest vertex degree, which is $n-1$. Moreover, we cannot bound the number of signatures and have to deal with infinitely many variables. To overcome this difficulty we  look at the dual program. The dual has up to four variables, which we name $\lambda_1$,$\lambda_2$,$\lambda_3$, and $\lambda_4$.
Since we cannot bound the number of variables in the primal, we have infinitely many dual constraints. By the weak linear programming duality it suffices to find a point in the feasible area of the dual, because every such point is an upper bound for the primal solution. Appendix~A lists all relevant dual programs. Let $Z_2$ be the LP solution for (G2) and $Z_3$ be the LP solution for (G3). The combined bound for $\bound$ can be computed by adding $\exp(Z_2)$ and $\exp(Z_3).$

We use a two step approach to find the solution of the dual programs. First we solve the dual program with a finite number of the constraints. We expect that the maximum number of spanning trees will be realized on a graph with evenly distributed vertex degrees. Therefore, we expect in the general case that the variables $f_{ij}$, with $i,j$ far away from $6$ will be zero, and hence the corresponding dual constraints will not hold with equality. For the restricted problems we expect values for $i,j$ around $3$ and $4$.
We use these assumptions to construct (finite) linear programs that will most likely contain the constraints that determine the dual solution.
We prove later that the other constraints are also fulfilled.
The candidates for the dual solution that were obtained by the finite programs are listed in Table~\ref{tbl:lpresults}. The table also lists our choice of  $\mu$ values. We picked these particular numbers because they have been experimentally proven to be useful in the  later analysis.
\renewcommand\arraystretch{1.1}%
\begin{table}
\centering
\begin{tabular}{|l|c|c|c|}
\hline
LP instance & $\mu$ values &  \multirow{1}{*}{solution} &  \multirow{1}{*}{$\bound$} \\
\hline
general problem ($2$-cycles) & \multirow{2}{*}{$\mu_1=0{.}3,\mu_2=0{.}25$}&$\lambda_2=0{.}180948$ &   \multirow{5}{*}{$5{.}28515$} \\
 (G2)$\to\max$, s.t. (A2-C2), $m=3$& &  $\lambda_3=0{.}232445$ & \\
 \cline{1-3}
 general problem ($3$-cycles)  &\multirow{3}{*}{$\mu_3=0{.}225,\mu_4=0{.}225$}&$\lambda_1=0{.}0980332$   &   \\
  (G3)$\to\max$, s.t. (A3-C3) & &  $\lambda_2=0{.}192612$ &\\
  & & $\lambda_3=0{.}247984$ & \\
 \hline
 restricted problem (no triangles) & \multirow{2}{*}{$\mu_1=0{.}9,\mu_2=0{.}1$}& \multirow{2}{*}{$\lambda_3=0{.}614264$}&   \multirow{2}{*}{$3{.}41619$}  \\
 (R2)$\to\max$, s.t.  (A2-C2), $m=2$ &  & & \\
\hline
 restricted problem (no  3,4-gons) & \multirow{3}{*}{$\mu_1=0{.}615,\mu_2=0{.}385$}& $\lambda_1=-0{.}615054$ &  \multirow{3}{*}{$2{.}71567$}  \\
 (R2)$\to\max$, s.t.  (A2-C2,E33)&  & $\lambda_3=0{.}744706$ &  \\
 $m= 5/3$ && $\lambda_4=0{.}001954$  & \\
  \hline
\end{tabular}
\label{tbl:lpresults}
\caption{The results of the dual linear programs. Not specified $\lambda$ values are zero.}
\end{table}

The verifications of the LP solutions is tedious. We redirect the interested reader for a complete analysis to the \emph{Mathematica} scripts, downloadable under  \url{http://wwwmath.uni-muenster.de/u/schuland/scripts-nst.zip}. For the 2-cycle part of the general problem we present the general idea behind the analysis in this place. Remarks for the analysis of the other programs can be found in Appendix~B.

\begin{lemma}\label{lem:verification}
For $\mu_1=0{.}3$, $\mu_2=0{.}25$ and $\lambda_1=0$, $\lambda_2=0{.}180948$, $\lambda_3=0{.}232445$, and $m=3$ all dual constraints
\emph{(A2-C2)} for feasible signatures are fulfilled.
\end{lemma}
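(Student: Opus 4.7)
The plan is to verify the infinite family of dual constraints, one for each feasible signature $(i,j,A,B)$ with $i,j\geq 3$, $|A|=i-1$, $|B|=j-1$, and every entry of $A,B$ at least $3$ (minimum degree in a 3-connected planar graph). Since $\lambda_1=0$ in the candidate solution, the constraint associated with the primal variable $f_{ij}(A,B)$ reduces to
\[
\lambda_2\!\Bigl(\sum_{a_r\in A}\tfrac{1}{a_r(i-1)}+\sum_{b_r\in B}\tfrac{1}{b_r(j-1)}\Bigr)+\lambda_3 \;\geq\; \mu_1\!\Bigl(\tfrac{\log i}{i}+\tfrac{\log j}{j}\Bigr)+\mu_2\!\Bigl(\sum_{a_r\in A}\tfrac{\log a_r}{a_r(i-1)}+\sum_{b_r\in B}\tfrac{\log b_r}{b_r(j-1)}\Bigr)+\log P_{ij}(A,B).
\]
Writing $s(i,j,A,B)$ for the slack (LHS minus RHS), the task is to prove $s\geq 0$ on the feasible domain.

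First, I would reduce the problem to finitely many signatures by a monotonicity argument on the length of $A$ and $B$. Consider appending a new entry $d\geq 3$ to $A$ while incrementing $i$ by one (to keep $|A|=i-1$). The slack increment decomposes into the direct term $(\lambda_2-\mu_2\log d)/(d(i-1))$, a small shift in $\mu_1\log i/i$, a shift in $\lambda_3$ (none, since $\lambda_3$ is a constant summand), and the change in $-\log P_{ij}$ coming from the extra factors of $P_2(i,A)$ and $P_3(i,A)$ and from updating the leading factor $(1-2/(i j a_1))$. Every new factor in $P_2$ and $P_3$ is strictly less than one, so $P_{ij}$ strictly decreases and $-\log P_{ij}$ strictly increases, which is a non-negative contribution to $s$. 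Therefore once $d$ exceeds an explicit threshold $d^\ast\approx e^{\lambda_2/\mu_2}$, the increment is non-negative, and the analogous statement holds for $B$. A similar asymptotic analysis bounds $i$ and $j$ from above: for large $i$, the terms $\mu_1\log i/i$ and $\log P_{ij}$ tend to zero while $\lambda_3$ remains, so $s$ is bounded below by a positive constant.

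Second, for each remaining signature in the finite residual set I would substitute the numerical values $\mu_1=0.3$, $\mu_2=0.25$, $\lambda_2=0.180948$, $\lambda_3=0.232445$, and verify the inequality directly. This is the role of the \emph{Mathematica} scripts referenced in the text, and the total number of residual cases is small enough to be enumerated exhaustively.

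The main obstacle is making the monotonicity quantitative. The positive contribution from the new $P_2$ and $P_3$ factors is only of order $1/(id)$, so one has to be careful that the threshold $d^\ast$ and the corresponding bound on $i,j$ are both small enough that the residual finite case-check is tractable. Moreover, the leading factors $(1-2/(i j a_1))$ and $(1-2/(i j b_1))$ single out $a_1,b_1$ and therefore interact with the induction in a non-uniform way; the clean fix is to prove a slightly strengthened inductive statement on the partial slack after each appended entry, and then combine it with a direct verification of the base cases in which $A,B$ have minimal length.
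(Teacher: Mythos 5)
Your high-level plan (an analytic reduction to finitely many signatures followed by a finite brute-force check) matches the paper's strategy, but the analytic reduction you propose does not work, and that reduction is the entire mathematical content of the lemma. The key false step is the claim that appending an entry $d$ to $A$ increases the slack once $d$ exceeds $d^\ast\approx e^{\lambda_2/\mu_2}\approx 2.06$. You have the sign of the threshold backwards: the direct term $(\lambda_2-\mu_2\log d)/(d(i-1))$ is non-negative only for $d\le e^{\lambda_2/\mu_2}$, hence it is \emph{negative} for every feasible $d\ge 3$, and it is most negative at $d=6$, where $\mu_2\log 6-\lambda_2\approx 0.267$. The compensating gain you invoke from the new factors of $P_2$ and $P_3$ is of order $1/(i^2 j d)$, an order of magnitude smaller than the loss $\approx 0.267/(6(i-1))$, so the slack genuinely decreases when, say, a degree-$6$ entry is appended to a sequence of $3$'s. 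Consequently there is no monotone-in-length reduction to minimal signatures, unboundedly many signatures survive your filter, and the ``finite residual set'' never becomes finite. Two further gaps: incrementing $i$ rescales all existing terms $1/(a_r(i-1))$, which your increment decomposition omits; and your large-$i$ argument (``$\mu_1\log i/i$ and $\log P_{ij}$ tend to zero while $\lambda_3$ remains'') ignores that the entry sums are averages of bounded terms and do \emph{not} vanish as $i\to\infty$ — one must check quantitatively that their worst-case total $2(\mu_2\log 6-\lambda_2)/6\approx 0.089$ plus $\mu_1(\log i/i+\log 3/3)$ falls below $\lambda_3=0.232445$, which happens only for $i\ge 31$.

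The paper's reduction is structured differently and avoids these traps: it first discards the $\log P_{ij}$ term altogether (it is negative, so dropping it only strengthens what must be shown), then bounds the aggregate contribution of the entries of $A$ and $B$ by the pointwise worst case $a_r=6$, giving a bound that depends only on the pair $(i,j)$; the decay of $\mu_1\log i/i$ then eliminates all $i\ge 31$ (and symmetrically $j\ge 31$), leaving $83$ pairs. Only at that point is the log term reinstated, and for each remaining pair explicit upper bounds on the entries of $A$ and $B$ are derived via a mixed-second-derivative argument, after which sorted sequences are enumerated. To repair your argument you would need to replace the append-an-entry induction by this kind of uniform per-entry worst-case bound.
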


\paragraph{Proof outline.} The complete proof including technical details and algorithms for brute force testing of some cases can be found in the \textit{Mathematica} scripts.

We have to show that  the left-hand side of  \eqref{eq:dual} is negative.
Let us ignore the $\log$-term for now -- it is negative, and therefore it suffices to show that \eqref{eq:dual} without the $\log$-term is negative.
Let $a$ be some vertex degree stored in the sequences $A$ or $B$. We notice that the ``effect'' of $a$ in  \eqref{eq:dual} is $\mu_2 \log(a) /(a(i-1)) -  \lambda_2 /(a(i-1))$. For integers this term is maximized for $a=6$, and for $a\leq 5$ monotonically increasing. Hence, it is sufficient to show  that \eqref{eq:dual} holds for all $a\geq 6$.

Using the lower bound $a=6$ gives us

\begin{multline} \mu_2\left(\sum_{a \in A} \frac{\log a}{a(i-1)}+\sum_{b \in B} \frac{\log b}{b(j-1)}\right) -\lambda_2\left(\sum_{a \in A} \frac{1}{a(i-1)}+\sum_{b \in B} \frac{1}{b(j-1)}\right) \\ \leq 2 \mu_2  (\log 6)/6 - 2 \lambda_2 /6.\end{multline}
We know  that $\lambda_1=0$, and that $(\log j)/j$   is maximized for integers at $j=3$. Hence, if the expression $ \mu _1((\log i)/i + (\log 3 )/3) + 2\mu _2(\log 6)/6 -2\lambda_2 /6- \lambda_3$ is negative, the dual constraint is fulfilled. By considering the partial derivative in $i$, we see that this holds for all $i\geq 31$ (and a symmetric argument implies the same for $j\geq 31$). For all other pairs $ij$ we check the dual constraint by hand using again the lower bound $6$ for the entries of $A$ and $B$. By this we can eliminate more cases and are left with $83$ tuples (see Appendix~B).

We will now consider the $\log$-term of  \eqref{eq:dual} again. Therefore, we have to obtain new upper and lower bounds for the entries of $A$ and $B$. Let $a$ be a vertex degree that is stored in $A$ or $B$.
It can be observed that the $\log$-term in terms of  $a$ is increasing. Since the function without $\log$-term was increasing between $3$ and $6$ the function with $\log$-term is increasing as well in this range and hence the maximum has to be attained on a value larger than $6$.

To obtain an upper bound on the entries of $A$ and $B$ we argue as follows: We first assume that all entries of $A$ and $B$ equal $6$, except for some $a_x \in A$. We look at  the derivative of the constraint  \eqref{eq:dual} in $a_x$ and observe that the maximum is realized at some value $a_x'$. We study now what happens if we change one of the values, say $a_y$, which we assumed to be $6$. Let $g(a_x,a_y)$ be the dual constraint in $a_y$ and $a_x$. It can be shown that  $ \partial^2 g(a_x,a_y)/\partial a_x\partial a_y$ is negative. Hence, for $a_y>6$,

\[0<\frac{\partial g(a_x,6)}{\partial a_x}-\frac{\partial g(a_x,a_y)}{\partial a_x}=\frac{(\partial g(a_x,6)- g(a_x,a_y))}{\partial a_x}.\]

In other words, the differences between $g(a_x,6)$ and $g(a_x,a_y)$ are increasing in $a_x$. Let $a'_y$ be fixed and  $d= g(a'_x,6)-g(a'_x,a'_y)$. Clearly the function $h(a_x,a'_y):=g(a_x,a'_y)+d$ attains its extrema on the same positions as $g(a_x,a'_y)$ and its differences to $g(a_x,6)$ are also increasing in $a_x$. Since $h(a'_x,a'_y)=g(a'_x,6)$, the graph of $h(a_x,a'_y)$ is always below $g(a_x,6)$ for $a_x>a'_x$. Therefore, no maximum can be attained on $h(a_x,a'_y)$ for $a_x>a'_x$ and hence the smallest upper bound is achieved for  $a_y=6$. Hence $a'_x$ is a valid upper bound for $a_x$. The computed bounds for every remaining pair $ij$ can be found in  Appendix~B.

As last step we check all possible signatures that are left to test. Notice that there are still many cases open. We observe that the ordering of the degrees in $A$ and $B$  does not matter, except for  $P_3(i,A) P_3(j,B)  \Big( 1- \frac{2}{i j a_1} \Big) \Big(1 - \frac{2}{i j b_1}\Big)$. But this expression can be bounded by using the maximum in $A$ for all $a_x$, and the maximum in $B$ for all $b_x$. With this estimation we can try all sequences with \emph{sorted} sequences $A$ and $B$. This eliminates most of the cases. There are three pairs left to test, namely $(6,6)$, $(5,6)$, and $(5,7)$. We test the constraints for the remaining signatures by checking \eqref{eq:dual} for all combinations.

The solutions for the linear programs lead to the main theorem.
\begin{theorem}\label{thm:main}
Let $G$ be a planar graph with $n$ vertices. The number of spanning trees of $G$ is at most $O(5{.}28515^n)$. If $G$ is 3-connected and contains no triangle, then the number of its spanning trees is bounded by $O(3{.}41619^n)$. If $G$ is 3-connected and contains no triangle and no quadrilateral, then the number of its spanning trees is bounded by $O(2{.}71567^n)$.
\end{theorem}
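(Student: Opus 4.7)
The plan is to assemble the three bounds by combining the probabilistic analysis from Sections 2.1--2.2, the charging scheme from Section 2.3, the necessary constraints from Section 2.4, and weak LP duality. For each of the three claims, I will set up a suitable (infinite) linear program whose optimum upper bounds $\log T(n)/n$, $\log T_4(n)/n$, or $\log T_5(n)/n$, exhibit a feasible dual solution from Table~\ref{tbl:lpresults}, and verify all dual constraints.

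First I would record the identity $\log t(G)=D+\log P_{\text{nc}}$ from equation~\eqref{eq:simple} and substitute the bound on $\log P_{\text{nc}}$ from \eqref{eq:loglongdeg} (in the general case) or \eqref{eq:loglongdegR} (in the restricted cases), obtained by applying Lemma~\ref{lem:prob2} to the cycles of $G$ using the mutually exclusive dependencies and union-closed independencies established in Section~\ref{subsec:dependencies1}. Next I would split $D=D_1+D_2+D_3+D_4$ via the weights $\mu_i$ from Table~\ref{tbl:lpresults} and distribute each $D_\ell$ onto the $2$-extensions as in equations~\eqref{eq:D}, producing objective functions of the forms \eqref{eq:G2}, \eqref{eq:G3}, \eqref{eq:R2} that depend only on the $f$-variables. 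After normalizing $f$ by $n$ to obtain an $n$-free formulation, the necessary constraints \eqref{A2}--\eqref{eq:C3} (together with \eqref{eq:Eij} for the pentagon case and the appropriate bound on $m$) carve out a feasible region, and the combined bound on $\bound$ comes from $\exp(Z_2)+\exp(Z_3)$ while $\bound_4$ and $\bound_5$ come from the single restricted LPs.

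To reach the numerical values $5.28515$, $3.41619$, and $2.71567$ claimed in the theorem, I would pass to the dual LP. Because the primal has infinitely many variables (one per feasible signature), the dual has infinitely many constraints but only up to four variables $\lambda_1,\ldots,\lambda_4$. By weak duality any feasible dual point is an upper bound on the primal optimum, so it suffices to exhibit the specific $\lambda$-tuples listed in Table~\ref{tbl:lpresults} and check that they satisfy every dual constraint. For the two-cycle part of the general problem this is exactly the content of Lemma~\ref{lem:verification}; analogous verifications handle the three-cycle general program and the two restricted programs, following the outlines indicated in Appendix~B and the \emph{Mathematica} scripts.

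The hard part will be the last step, namely certifying that the candidate dual points are feasible for \emph{all} signatures $(i,j,A,B)$ and $(i,j,k,A,B,C)$, since the signature space is infinite and the $\log P_{ij}$, $\log P_{ijk}$ terms couple the degree entries in a nontrivial way. The strategy, mirroring the proof outline of Lemma~\ref{lem:verification}, is to first drop the $\log$-correction (which is negative) and use monotonicity in each sequence entry to show that large central degrees $i,j$ (and large sequence entries) are safe, reducing to a finite collection of pairs; then reintroduce the $\log$-correction, compute explicit upper bounds on each sequence entry by analyzing partial derivatives of the dual constraint and exploiting negativity of the mixed second derivative $\partial^2 g/\partial a_x\partial a_y$; and finally rule out the residual finite list by exploiting invariance of most terms under permutations of $A$ and $B$, overestimating the asymmetric factor $P_3$, and brute-forcing the handful of remaining signatures. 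Adding $\exp(Z_2)+\exp(Z_3)=5.28515^{n(1+o(1))}$ and reading off the restricted values then yields all three bounds of Theorem~\ref{thm:main}.
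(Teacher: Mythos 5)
Your proposal reproduces the paper's own proof essentially verbatim: the same decomposition $\log t(G)=D+\log P_{\text{nc}}$, the same application of Lemma~\ref{lem:prob2} to get \eqref{eq:loglongdeg}/\eqref{eq:loglongdegR}, the same $\mu$-weighted charging into the $f$-variables, the same normalized infinite LPs with dual certificates from Table~\ref{tbl:lpresults}, and the same verification strategy as in Lemma~\ref{lem:verification} and Appendix~B. One minor slip (inherited from the paper's own phrasing): since $\log t(G)$ is bounded by the \emph{sum} of the two objective values, the combined general bound is $\exp(Z_2+Z_3)=\exp(Z_2)\cdot\exp(Z_3)\approx 5.28515$, not $\exp(Z_2)+\exp(Z_3)$.
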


\section{Further bounds and future work}
The results of Theorem~\ref{thm:main} improve several related upper bounds.
Using the observations by Rib\'o \emph{et al.}~\cite{rrs-epsg-07} we obtain the following bounds for grid embeddings of 3d polytopes.
\begin{corollary}
Let $G$ be the graph of a $3$-polytope $\mathcal{P}$ with $n$ vertices. $\mathcal{P}$ admits a realization as combinatorial equivalent polytope with integer coordinates and
\begin{enumerate}
\item no coordinate larger than $O(147{.}7^n)$,
\item no coordinate larger than $O(39{.}9^n)$, if $G$ contains a quadrilateral,
\item no coordinate larger than $O(28{.}4^n)$, if $G$ contains a triangle.
\end{enumerate}
\end{corollary}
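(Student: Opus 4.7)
The plan is straightforward: this corollary is a direct numerical consequence of Theorem~\ref{thm:main} combined with the grid-size reduction of Rib\'o, Rote, and Schulz~\cite{rrs-epsg-07} that was already recalled in the introduction. Specifically, for a 3-polytope $\mathcal{P}$ with 3-connected planar graph $G$, the cited framework yields a combinatorially equivalent integer realization whose coordinates are at most (up to polynomial factors) $\bound^{2n}$ when $G$ contains a triangle, $\bound_4^{3n}$ when $G$ contains a quadrilateral, and $\bound_5^{5n}$ in every remaining case. Note that the last situation is forced by Euler's formula, which guarantees that every 3-connected planar graph has a face of length at most~$5$, so these three cases exhaust all 3-polytopes and match one-to-one with the three items of the corollary.

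I would handle the three cases in reverse order. For case~(3), substituting the estimate $\bound\le 5.28515$ from Theorem~\ref{thm:main} gives $\bound^{2n}\le (5.28515)^{2n}$, which is absorbed into $O(28.4^n)$. For case~(2), the estimate $\bound_4\le 3.41619$ yields $(3.41619)^{3n}$, absorbed into $O(39.9^n)$. For case~(1), since no restriction is imposed on the face structure of $G$ beyond planarity and 3-connectedness, the coordinate size is governed by the pentagon bound; using $\bound_5\le 2.71567$ one obtains $(2.71567)^{5n}$, absorbed into $O(147.7^n)$.

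The argument is conceptually entirely contained in the black-box statements of~\cite{rrs-epsg-07} and Theorem~\ref{thm:main}, so no real obstacle arises beyond bookkeeping. The only things to verify are that the rounded bases in the statement indeed dominate the sharp numerical values of $\bound^2$, $\bound_4^3$, and $\bound_5^5$, and that the polynomial prefactors hidden in the reduction can be absorbed into the $O(\cdot)$-notation; both checks are routine and require no new combinatorial input.
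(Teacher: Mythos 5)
Your proposal is correct and follows exactly the route the paper intends: the corollary is a direct plug-in of the bounds $\bound\le 5.28515$, $\bound_4\le 3.41619$, $\bound_5\le 2.71567$ from Theorem~\ref{thm:main} into the grid-size reduction of~\cite{rrs-epsg-07} (triangle $\to\bound^{2n}$, quadrilateral $\to\bound_4^{3n}$, and the always-present $\le 5$-gon guaranteed by Euler's formula $\to\bound_5^{5n}$), followed by the routine numerical checks $5.28515^2\approx 27.93<28.4$ and $3.41619^3\approx 39.87<39.9$. The only caveat, inherited from the paper itself rather than introduced by you, is that $2.71567^5\approx 147.7015$ marginally exceeds $147.7$, so the stated constant really corresponds to the abstract's rounding $\bound_5\le 2.7156$.
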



The number $F(n)$ of cycle-free graphs in a planar graph with $n$ vertices is bounded by the number of selections of at most $n-1$ edges from the graphs~\cite{ahhhk-npgg-07}. Thus, $F(n)\leq \sum_{k=0}^{n-1} \binom{3n-6}{k}$.
For $0 \leq q \leq 1/2$ we have that $\sum_{i=0}^{\lfloor q m\rfloor} \binom{m}{qm}<2^{H(q)m}$,    where $H(q):=-\log(q)^q-\log(1-q)^{(1-q)}$ is the binary entropy (see for example~\cite[page 427]{gf-pct-06}). This shows that, $F(n)<6{.}75^n$ by setting $m=3n$ and $q=1/3$.

We give a better bound based on the bound for the number of spanning trees. We first bound the number $F(n,k)$ of forests in $\mathcal{G}_n$ with $k$ edges. On one hand,
the above argument yields an upper bound of $F (n,k) \leq  f_1(k) := \binom{3n-6}{k}$. On the other hand, every forest with $k$ edges can be constructed by selecting $k$ edges from a spanning tree of $\mathcal{G}_n$. This gives as alternative bound $F(n,k) \leq f_2(k) =: \binom{n-1}{k}T(n)$.
Now, the number of cycle-free graphs is bounded by
\[
F(n) =\sum_{k=0}^{n-1} F(n,k) \leq n \max_{0\leq k <n} F(n,k) \leq n \max_{0\leq q<1} \min (f_1(q n),f_2(q n)).
\]
We use $\binom{n}{qn} \leq 2^{n H(q)}$ as upper bound for the binomial coefficients (see for example \cite[page 1097]{clrs-ia-01}) to obtain
\[
f_1(q n) < \hat f_1(q n) :=2^{3n H(q/3)} \quad \mbox{ and } \quad
f_2(q n) < \hat f_2(q n) := T(n) 2^{nH(q)}.
\]
The computed maximal value for the minimum of $\hat f_1$ and $\hat f_2$ is realized at $qn= 0{.}94741\; n$.
This yields a bound of $n\cdot 6{.}4948^n$ for the number of cycle-free graphs. For the computation of these values we used numerical methods. Observe that  $\hat f_1(qn)$ realizes $6{.}4948^n$ at  a larger value $q$ than $\hat f_2(qn)$. The correctness of the numerical computations follows from the monotonicity of $\hat f_1$ and $\hat f_2$ in $(n/2,n]$.
%
%
For the number of plane spanning trees and cycle-free graphs on a planar point set, we obtain improved upper bounds by multiplying our bounds with the bound of $O(30^n)$ on the maximum number of triangulations on a planar point set~\cite{ss-ctpps-10}.
\begin{theorem}
The number of cycle-free graphs in a planar graph with $n$ vertices is bounded by $n\cdot 6{.}4948^n$. The number of plane
spanning trees on $n$ planar points is in $O(158.6^n)$, the number of cycle-free graphs in $O(194{.}7^n)$.
\end{theorem}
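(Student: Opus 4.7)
The theorem bundles together three distinct bounds, and each is obtained by combining the improved value of $T(n)$ from Theorem~\ref{thm:main} with a covering/enumeration idea, so I would tackle them in order.

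For the number $F(n)$ of cycle-free subgraphs of a planar graph on $n$ vertices, the plan is to count forests by their number of edges: let $F(n,k)$ be the number of such forests with exactly $k$ edges, so that $F(n)=\sum_{k=0}^{n-1}F(n,k)\le n\cdot\max_k F(n,k)$. Every forest with $k$ edges is a choice of $k$ edges from the at most $3n-6$ edges of the host planar graph, giving $F(n,k)\le f_1(k):=\binom{3n-6}{k}$. Alternatively, every such forest extends to a spanning tree and is therefore a $k$-edge subset of some spanning tree, giving $F(n,k)\le f_2(k):=\binom{n-1}{k}T(n)$. Using the standard entropy bound $\binom{n}{qn}\le 2^{nH(q)}$, both $f_1$ and $f_2$ are replaced by closed-form functions $\hat f_1(qn)=2^{3nH(q/3)}$ and $\hat f_2(qn)=T(n)\,2^{nH(q)}$. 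The bound then reduces to maximizing $\min(\hat f_1(qn),\hat f_2(qn))$ over $q\in[0,1)$, with $T(n)$ plugged in from Theorem~\ref{thm:main}.

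The main step is a one-variable optimization. Here $\hat f_2$ grows quickly with $q$ until $q=1/2$ and then decreases, whereas $\hat f_1$ is monotone on a different range; substituting $T(n)\le 5.28515^n$ and equating the two curves numerically locates the optimum around $q\approx 0.94741$, giving base $6.4948$. The delicate point I would spend the most time on is justifying the numerical computation rigorously: since both $\hat f_1$ and $\hat f_2$ are monotone on $(n/2,n]$, one only needs to verify that the crossing point is computed to enough precision that rounding still keeps the bound below $6.4948^n$, and this monotonicity argument is exactly what is hinted at in the excerpt.

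For the second and third claims I would use the standard observation that any plane graph on a point set $S$ embeds as a subgraph of some triangulation of $S$; thus the number of plane spanning trees (respectively plane cycle-free graphs) on $n$ points is at most the number of triangulations of $S$ times the maximum number of spanning trees (respectively cycle-free subgraphs) in any fixed planar triangulation. Plugging in Sharir and Sheffer's bound $O(30^n)$ and the bounds $5.28515^n$ and $6.4948^n$ from Theorem~\ref{thm:main} and the first part of the present theorem gives $30\cdot 5.28515<158.6$ and $30\cdot 6.4948<194.7$, yielding $O(158.6^n)$ and $O(194.7^n)$ respectively. The only care needed is to absorb the polynomial factor $n$ from the first claim into the $O(\cdot)$, which is fine because $n\cdot 6.4948^n=O(6.4948^n\cdot\mathrm{poly})$ is swallowed by bumping the base slightly, and $30\cdot 6.4948$ already leaves enough room below $194.7$ to accommodate this.
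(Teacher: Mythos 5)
Your proposal follows the paper's proof essentially verbatim: the same split of $F(n)$ into $F(n,k)$, the same two bounds $f_1(k)=\binom{3n-6}{k}$ and $f_2(k)=\binom{n-1}{k}T(n)$, the same entropy estimate, the numerical optimum near $q\approx 0.94741$ justified by monotonicity of $\hat f_1,\hat f_2$ on $(n/2,n]$, and multiplication by the $O(30^n)$ triangulation bound for the two point-set statements. The only quibble is arithmetic: $30\cdot 6.4948=194.844>194.7$, so the third constant must come from a more precise (smaller) value of the optimum than the rounded $6.4948$ --- an inconsistency the paper itself shares, since its abstract quotes $6.4884$.
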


%

We expect better bounds for the number of cycle-free graphs in a planar graph from a more direct application of the outgoing edge approach. By adding a new vertex that is linked to a subset of the other vertices, every cycle-free graph can be turned into a spanning tree of the augmented graph.  Without excluding any cycles we get a bound of  $7^n$.
Under the assumption that almost every vertex has degree $6$, the refined outgoing edge method would yield a bound of $6{.}5027^n$ when excluding 2-cycles and of $6{.}4244$ when excluding 2 and 3-cycles.
So far we were not able to  check all constraints of the corresponding linear programs.

We finish our presentation with a discussion on how one could improve our results further. Since we consider  only $2$-cycles and $3$-cycles from triangles, one would obtain a better bound for $P_{\text{nc}}$ by taking also larger cycles into account. We do not expect to win anything by considering $3$-cycles that are not triangles, because in the lower bound example (the wrapped up triangular grid)  all $3$-cycles are triangles.
The analysis using larger cycles is more complicated and needs an extensive case distinction. Furthermore, we expect that there would be too many cases left for the brute force check. From our perspective, the following refinement  seems tractable: Beside the $2$-cycles, and $3$-cycles on triangles, we also analyze $4$-cycles that belong to two triangles sharing an edge (the diagonal). The $4$-cycles can be analyzed by extending the events $C_i$ for the $2$-cycles to the following event: the $i$-th $2$-cycle occurs, or the corresponding $4$-cycle, whose diagonal is associated with the $i$-th cycle occurs. Assuming that the solution of the corresponding linear program is given by having almost every vertex degree $6$, this would lead to $\bound=5.25603$. Since the resulting linear program is more complicated, the verification of the dual solutions is tedious.

Notice that Lemma~\ref{lem:prob2} uses two enumerations of the events $C_i$ to avoid the influence of the ordering. An elaborated enumeration scheme of the events $C_i$ might give better bounds. Furthermore, we could consider ``extension of extensions'' to analyze larger locally connected pieces of the graph at once. This results in a powerful but very complicated incarnation of the outgoing edge approach.

The reader might think, that additional constraints in the linear programs might improve the outcome of our analysis. However, we expect that the solutions of the dual programs give the correct distribution of signatures. In particular, the solutions the dual programs match the candidates for the lower bound examples that were presented in~\cite{r-rcpps-06}.

\paragraph{Acknowledgements:} We thank G\"unter Rote for suggesting this problem to us and for many inspiring and fruitful discussions on this subject.

\bibliographystyle{abbrv}
\bibliography{stpg}
\vspace*{\fill}
\pagebreak[4]

\section*{Appendix A: Dual programs}
\subsubsection*{General Problem ($2$-cycle part):}
\[\text{Minimize } \lambda_1+\lambda_2+ 3\lambda_3,\]
such that, $\lambda_3\geq 0$, and for all signatures $(i,j,A,B)$:
  \begin{multline}\label{eq:dual}
\log P_{ij}(A,B) + \mu_1  \Big(\frac{\log i}{i}+\frac{\log j}{j}\Big) + \mu_2  \Big( \sum_{a_r \in A} \frac{\log a_r}{a_r(i-1)} +\sum_{b_r \in B} \frac{\log b_r}{b_r(j-1)}\Big)\\ -  \lambda_1 \Big(\frac{1}{i}+\frac{1}{j}\Big) - \lambda_2   \Big( \sum_{a_r \in A} \frac{1}{a_r(i-1)}+\sum_{b_r \in B} \frac{1}{b_r(j-1)}\Big) -\lambda_3 \leq 0.
 \end{multline}
 \subsubsection*{General Problem ($3$-cycle part):}
\[\text{Minimize } \lambda_1+\lambda_2+ 2\lambda_3,\]
such that, $\lambda_3\geq 0$, and for all signatures $(i,j,k,A,B,C)$:
 \begin{multline*}\hspace*{-2ex}
 \log P_{ijk}(A,B,C) + \mu_3\!  \Big(\frac{\log i}{i}+\frac{\log j}{j}+\frac{\log k}{k}\Big) +\mu_4\!\Big( \!\sum_{a_r \in A}\! \frac{\log a_r}{a_r(i-1)}+\!\sum_{b_r \in B}\! \frac{\log b_r}{b_r(j-1)}+\!\sum_{c_r \in C}\! \frac{\log c_r}{c_r(k-1)}\Big) \\
 -\lambda_1 \Big(\frac{1}{i}+\frac{1}{j}+\frac{1}{k}\Big) -\lambda_2 \Big( \sum_{a_r \in A} \frac{1}{a_r(i-1)}+\sum_{b_r \in B} \frac{1}{b_r(j-1)}+\sum_{c_r \in C} \frac{1}{c_r(k-1)}\Big) -\lambda_3 \leq 0
 \end{multline*}
\subsubsection*{Restricted Problem (no triangles):}
\[\text{Minimize } \lambda_1+\lambda_2+ 2 \lambda_3,\]
such that, $\lambda_3\geq 0$, and for all signatures $(i,j,A,B)$:
 \begin{multline*}\hspace*{-2ex}
\log\hat P_{ij}(A,B) + \mu_1  \Big(\frac{\log i}{i}+\frac{\log j}{j}\Big) + \mu_2  \Big( \sum_{a_r \in A} \frac{\log a_r}{a_r(i-1)} +\sum_{b_r \in B} \frac{\log b_r}{b_r(j-1)}\Big)\\ -  \lambda_1 \Big(\frac{1}{i}+\frac{1}{j}\Big) - \lambda_2   \Big( \sum_{a_r \in A} \frac{1}{a_r(i-1)}+\sum_{b_r \in B} \frac{1}{b_r(j-1)}\Big) -\lambda_3 \leq 0.
 \end{multline*}
 \subsubsection*{Restricted Problem (no triangles, no quadrilaterals):}
\[\text{Minimize } \lambda_1+\lambda_2+ 5/3 \: \lambda_3,\]
such that, $\lambda_3\geq 0$, and for all signatures $(i,j,A,B)$:
 \begin{multline*}\hspace*{-2ex}
\log\hat P_{ij}(A,B) + \mu_1  \Big(\frac{\log i}{i}+\frac{\log j}{j}\Big) + \mu_2  \Big( \sum_{a_r \in A} \frac{\log a_r}{a_r(i-1)} +\sum_{b_r \in B} \frac{\log b_r}{b_r(j-1)}\Big)\\ -  \lambda_1 \Big(\frac{1}{i}+\frac{1}{j}\Big) - \lambda_2   \Big( \sum_{a_r \in A} \frac{1}{a_r(i-1)}+\sum_{b_r \in B} \frac{1}{b_r(j-1)}\Big) -\lambda_3 - \lambda_4 E_{ij}(A,B) \leq 0,
 \end{multline*}
 \[\text{where } E_{ij}(A,B)=\begin{cases} 0& \text{if $i\geq 4$},\\
                                 -\chi_3(A) & \text{if $i=3, j\geq 4$},\\
                                  4-  \chi_3(A) - \chi_3(B) & \text{if $i=3, j=3$}. \end{cases}
 \]
\vspace*{\fill}
\pagebreak[4]

\section*{Appendix B: Verification of the solutions of the dual linear programs}
We give in this place informations how the candidates for the dual solutions in Table~\ref{tbl:lpresults} can be checked. The complete analysis was computed by a series of \textsl{Mathematica} scripts, which can be found \url{http://wwwmath.uni-muenster.de/u/schuland/scripts-nst.zip}. These scripts also perform the necessary brute force part of our analysis. We follow the general, more detailed, procedure of verifying the dual solutions  presented in the proof of Lemma~\ref{lem:verification}. We discuss the four linear programs one by one.

\paragraph*{General case ($2$-cycle part):} This case is  covered in detail in the proof of Lemma~\ref{lem:verification}. We only list in
Table~\ref{tbl:bf-general2}
the intermediate results of the analysis, i.e., the cases which need to be checked  brute force. The table contains also the upper bound for the entries of $A$ and $B$ for each case. Notice that the calculations  for the bounds of the first entry $a_1$ of $A$ and of the first entry $b_1$ of $B$ are slightly different.
\begin{table}[htdp]
\begin{center}
\begin{tabular}{cp{1mm}cp{1mm}c}
\begin{tabular}{ c   c@{\hspace{1ex}} c@{\hspace{1ex}} c@{\hspace{1ex}}c  }
\hline
 & & \multicolumn{3}{c}{ upper bounds for} \\
$i$ & $j$ & $a_1/b_1$ &  $a\in A$ &  $b \in A$ \\
\hline
 3 & 3 & 10 & 9 & 9 \\
 3 & 4 & 9 & 8 & 8 \\
 4 & 4 & 8 & 8 & 8 \\
 3 & 5 & 8 & 7 & 8 \\
 4 & 5 & 7 & 7 & 8 \\
 5 & 5 & 7 & 7 & 7 \\
 3 & 6 & 8 & 7 & 8 \\
 4 & 6 & 7 & 7 & 7 \\
 5 & 6 & 7 & 7 & 7 \\
 6 & 6 & 7 & 7 & 7 \\
 3 & 7 & 7 & 7 & 8 \\
 4 & 7 & 7 & 7 & 7 \\
 5 & 7 & 7 & 7 & 7 \\
 6 & 7 & 7 & 7 & 7 \\
 7 & 7 & 7 & 7 & 7 \\
 3 & 8 & 7 & 7 & 8 \\
 4 & 8 & 7 & 7 & 7 \\
 5 & 8 & 7 & 7 & 7 \\
 6 & 8 & 7 & 7 & 7 \\
 7 & 8 & 7 & 7 & 7 \\
  8 & 8 & 7 & 7 & 7 \\
 3 & 9 & 7 & 7 & 8 \\
 4 & 9 & 7 & 7 & 7 \\
 5 & 9 & 7 & 7 & 7 \\
 6 & 9 & 7 & 7 & 7 \\
 7 & 9 & 7 & 7 & 7 \\
 8 & 9 & 7 & 7 & 7 \\
 9 & 9 & 7 & 7 & 7 \\
  \hline
\end{tabular} & &

\begin{tabular}{ c   c@{\hspace{1ex}} c@{\hspace{1ex}} c@{\hspace{1ex}}c  }
\hline
 & & \multicolumn{3}{c}{ upper bounds for} \\
$i$ & $j$ & $a_1/b_1$ &  $a\in A$ &  $b\in B$ \\
\hline

 3 & 10 & 7 & 7 & 8 \\
 4 & 10 & 7 & 7 & 7 \\
 5 & 10 & 7 & 7 & 7 \\
 6 & 10 & 7 & 7 & 7 \\
 7 & 10 & 7 & 7 & 7 \\
 8 & 10 & 7 & 7 & 7 \\
 3 & 11 & 7 & 7 & 8 \\
 4 & 11 & 7 & 7 & 7 \\
 5 & 11 & 7 & 7 & 7 \\
 6 & 11 & 6 & 7 & 7 \\
 7 & 11 & 6 & 7 & 7 \\
 3 & 12 & 7 & 7 & 8 \\
 4 & 12 & 7 & 7 & 7 \\
 5 & 12 & 7 & 7 & 7 \\
 6 & 12 & 6 & 7 & 7 \\
 7 & 12 & 7 & 7 & 7 \\
 3 & 13 & 7 & 7 & 8 \\
 4 & 13 & 7 & 7 & 7 \\
 5 & 13 & 6 & 7 & 7 \\
 6 & 13 & 7 & 7 & 7 \\
 3 & 14 & 7 & 7 & 8 \\
 4 & 14 & 7 & 6 & 7 \\
 5 & 14 & 6 & 6 & 7 \\
 6 & 14 & 7 & 6 & 7 \\
 3 & 15 & 7 & 6 & 8 \\
 4 & 15 & 6 & 6 & 7 \\
 5 & 15 & 7 & 6 & 7 \\
 6 & 15 & 6 & 6 & 7 \\
 \hline
\end{tabular} &&

\begin{tabular}{ c   c@{\hspace{1ex}} c@{\hspace{1ex}} c@{\hspace{1ex}}c  }
\hline
 & & \multicolumn{3}{c}{ upper bounds for} \\
$i$ & $j$ & $a_1/b_1$ &  $a \in A$ &  $b\in A$ \\
\hline
 3 & 16 & 7 & 7 & 8 \\
 4 & 16 & 6 & 6 & 7 \\
 5 & 16 & 7 & 7 & 7 \\
 3 & 17 & 6 & 6 & 8 \\
 4 & 17 & 7 & 7 & 7 \\
 5 & 17 & 6 & 7 & 7 \\
 3 & 18 & 6 & 7 & 8 \\
  4 & 18 & 7 & 7 & 7 \\
 5 & 18 & 6 & 6 & 7 \\
 3 & 19 & 7 & 7 & 8 \\
 4 & 19 & 6 & 6 & 7 \\
 3 & 20 & 7 & 6 & 8 \\
 4 & 20 & 6 & 6 & 7 \\
 3 & 21 & 6 & 6 & 7 \\
 4 & 21 & 6 & 6 & 7 \\
 3 & 22 & 7 & 6 & 7 \\
 4 & 22 & 6 & 6 & 7 \\
 3 & 23 & 6 & 6 & 7 \\
 4 & 23 & 6 & 6 & 7 \\
 3 & 24 & 6 & 6 & 7 \\
 4 & 24 & 6 & 6 & 7 \\
 3 & 25 & 6 & 6 & 7 \\
 3 & 26 & 6 & 6 & 7 \\
 3 & 27 & 6 & 6 & 7 \\
 3 & 28 & 6 & 6 & 7 \\
 3 & 29 & 6 & 6 & 7 \\
 3 & 30 & 6 & 6 & 7\\
 \hline
 &&&&
\end{tabular}

\end{tabular}
\caption{Cases for the brute force test in the $2$-cycle part of the general problem.}
\label{tbl:bf-general2}
\end{center}
\end{table}%

\paragraph*{General case ($3$-cycle part):} The lower bound for the entries of the entries of $A,B,C$ can be obtained by  maximizing $(\mu_2 \log a  - \lambda_2)/a$, which is maximized for integers at $a=6$. To get an upper bound for the $i,j,k$ values compute first the maximum of $ \mu _1\log x/x-\lambda _1/x=:\delta$. Knowing $\delta$ and the lower bound for for the entries of $A,B,C$, we are able to compute the the upper bound for the $i,j,k$ values as the maximum of $\mu _1(\log i /i + 2\log \delta/ \delta ) + 3\mu _2\log 6/6 -\lambda _1(1/i+2/\delta)- 3 \lambda_2 /6 - \lambda_3$. This expression is negative for all integers greater than $13$, hence we have to test all remaining tuples for $i,j,k\leq13$.
These cases are listed  in~\eqref{eq:bf-general3} as set of triplets $i,j,k$. For all $i,j,k$ triplets we obtained that $7$ is an upper bound for the entries in $A,B,C$.
{\small
\begin{multline}\label{eq:bf-general3}
\{(4,4,4),(4,4,5),(4,5,5),(5,5,5),(4,4,6),(4,5,6),(5,5,6),(4,6,6),(5,6,6),(6,6,6),\\
(4,4,7),(4,5,7),(5,5,7),(4,6,7),(5,6,7),(6,6,7),(4,7,7),(5,7,7),(6,7,7),(7,7,7), \\
(4,4,8), (4,5,8),(5,5,8),(4,6,8),(5,6,8),(6,6,8),(4,7,8),(5,7,8),(6,7,8),(7,7,8), \\
(4,8,8),(5,8,8), (6,8,8),(4,4,9),(4,5,9),(5,5,9),(4,6,9),(5,6,9),(6,6,9),(4,7,9), \\
(5,7,9),(6,7,9),(4,8,9),(5,8,9),(4,4,10),(4,5,10),(5,5,10),(4,6,10),(5,6,10), \\
(6,6,10),(4,7,10),(5,7,10),(4,4,11),(4,5,11),(5,5,11),(4,6,11),(5,6,11),(4,4,12), \\
(4,5,12),(5,5,12),(4,6,12), (4,4,13),(4,5,13),(3,4,5),(3,5,5),(3,4,6),(3,5,6), \\
(3,6,6), (3,4,7),(3,5,7),(3,6,7),(3,7,7), (3,4,8),(3,5,8),(3,6,8),(3,7,8),(3,4,9), \\
(3,5,9),(3,6,9), (3,4,10),(3,5,10),(3,4,11)\}
\end{multline}
}

\paragraph*{Restricted cases:}
For the restricted cases we use the natural lower bound of $3$ for the entries of $A$ and $B$. If the smallest face of $G$ is a quadrilateral, an upper bound for $i,j$ can be computed by maximizing $\mu _1((\log i)/i + \log  (3)/3) + 2\mu _2(\log 3)/3 - \lambda_3$. This expression is maximized for integers when $i=10$, and hence negative for $i\geq 10$. If the smallest face of $G$ is a pentagon, we have to compute the maximum of  $\mu _1 (\log x)/x-\lambda_1 /x=:\delta$ first. Using $\delta$ for $j$ we can show that $\mu _1(\log i)/i + (\log \delta)/\delta) + 2\mu _2(\log 3)/3 -\lambda_1(1/i+1/\delta)- 2\lambda_2 /3 - \lambda_3$ is negative for all $i\geq 11$ . Thus were are left with trying out all cases for $i,j < 10$ (if $G$ contains no triangle) and $i,j < 11$ (if $G$ contains no triangle and quadrilateral). The upper bounds of the entries of $A$ and $B$ are computed for each of these cases as explained in the proof of Lemma~\ref{lem:verification}.
\begin{table}[htdp]
\begin{center}
\begin{tabular}{cp{2cm}c}
\begin{tabular}{@{\hspace*{2ex}} c@{\hspace*{2ex}}  @{\hspace*{2ex}} c@{\hspace*{2ex}} c }
\hline

$i$ & $j$ &  upper bounds\\
& &  for $a\in A \cup B$  \\
\hline
 3 & 3 & 5 \\
 3 & 4 & 5 \\
 4 & 4 & 4 \\
 3 & 5 & 4 \\
 4 & 5 & 4 \\
 5 & 5 & 4 \\
 3 & 6 & 4 \\
 4 & 6 & 4 \\
 5 & 6 & 4 \\
 3 & 7 & 4 \\
 4 & 7 & 4 \\
 3 & 8 & 4 \\
 4 & 8 & 4 \\
 3 & 9 & 4 \\
  \hline
\end{tabular} &&
\begin{tabular}{@{\hspace*{2ex}} c@{\hspace*{2ex}}  @{\hspace*{2ex}} c@{\hspace*{2ex}}c  }
\hline
$i$ & $j$ &  upper bounds \\
& & for $a \in A\cup B$  \\
\hline
 3 & 3 & 4 \\
 3 & 4 & 4 \\
 4 & 4 & 3 \\
 3 & 5 & 4 \\
 4 & 5 & 3 \\
 5 & 5 & 3 \\
 3 & 6 & 4 \\
 4 & 6 & 3 \\
 5 &  6 & 3 \\
 3 & 7 & 4 \\
 4 & 7 & 3 \\
 3 & 8 & 4\\
 4 & 8 & 3 \\
 3 & 9 & 4 \\
 3 & 10 & 4\\
  \hline
\end{tabular} \\
no triangles & & no triangles and 4-gons
\end{tabular}
\caption{Cases for the brute force test of the restricted problems.}
\label{tbl:bf-restricted}
\end{center}
\end{table}%

The ``pentagon case'' contains one more subtlety. The constraints where $i$ or $j$ are $3$ are slightly more complicated because we included (E33) in the primal program. For convenience  we assume that the upper bound on the entries of $A$ and $B$ for these $ij$ values is at least $4$. This simplifies the computations since for a distinct neighbor $a_i$ the influence of (E33) is a constant when  $a_i$ varies between $4$ and $\infty$.
See
Table~\ref{tbl:bf-restricted}
for the remaining cases. Notice that we can assume that the sequences $A$ and $B$ are ordered since the order of $A$ and $B$ is not relevant for the dual constraints of the restricted cases.
\vspace*{\fill}
\pagebreak[4]

\end{document}